\pgfplotsset{every axis/.append style={
axis x line=middle,    
axis y line=middle,    
axis line style={<->}, 
ticks=none
}}
\renewcommand\section{\@startsection{section}{1}{\z@}%
                                   {-3.5ex \@plus -1ex \@minus -.2ex}%
                                   {2.3ex \@plus.2ex}%
                                   {\normalfont\large\bfseries\centering}}
\renewcommand\subsection{\@startsection{subsection}{2}{\z@}%
                                     {-3.25ex\@plus -1ex \@minus -.2ex}
                                     {1.5ex \@plus .2ex}%
                                     {\normalfont\large\bfseries}}
\renewcommand{\@seccntformat}[1]{\csname the#1\endcsname\ifnum\pdfstrcmp{#1}{section}=0 .\fi\quad}
\renewcommand\@begintheorem[2]{\trivlist\item[\hskip\labelsep{\bfseries#1 #2.}]\it}
\renewcommand\@opargbegintheorem[3]{\trivlist\item[\hskip\labelsep{\bfseries#1 #2}] {\bfseries(#3).}\enspace\it\ignorespaces}
\renewenvironment{abstract}{\begin{quote}\hrulefill\par\footnotesize\textbf{\abstractname.}}{\par\vskip-0.5\baselineskip\hrulefill\end{quote}}
\newtheorem{introtheorem}{Theorem}  
\newtheorem{introcorollary}[introtheorem]{Corollary}  
\newtheorem{introproposition}[introtheorem]{Proposition}  
\newtheorem{introconjecture}[introtheorem]{Conjecture}  
\newtheorem{thm}{Theorem}[section]
\newtheorem{theorem}[thm]{Theorem}
\newtheorem{lemma}[thm]{Lemma}
\newtheorem{proposition}[thm]{Proposition}
\newtheorem{corollary}[thm]{Corollary}
\newtheorem{conjecture}[thm]{Conjecture}
\newcommand\mkthm[2]{\newenvironment{#1}{\begin{#2}\rm}{\end{#2}}}
\newtheorem{thevarthm}[thm]{\varthmname}
\newenvironment{varthm*}[1]{\trivlist\item[]{\bf #1.}\it}{\endtrivlist}
\newenvironment{proof}[1][Proof]{\trivlist\item[\hskip\labelsep{\textit{#1.}}]}{\hspace*{\fill}$\Box$\endtrivlist}
\let\tilde=\widetilde
\renewcommand\O{\mathcal O}
\newcommand\engqq[1]{``#1''}
\renewcommand\emptyset{\varnothing}  
\renewcommand\ge{\geqslant}  
\newcommand\keywords[1]{{\renewcommand\thefootnote{}\footnotetext{\emph{Keywords:} #1.}}}
\newcommand\subclass[1]{{\renewcommand\thefootnote{}\footnotetext{\emph{Mathematics Subject Classification (2010):} #1.}}}
\newcommand\Q{\mathbb Q}
\newcommand\R{\mathbb R}
\newcommand\Z{\mathbb Z}
\newcommand\N{\mathbb N}
\newcommand\be[1][@{\;}r@{\;}c@{\;}l@{\;}l@{\;}]{$$\everymath{\displaystyle}\renewcommand\arraystretch{1.2}\begin{array}{#1}}
\newcommand\ee{\end{array}$$}
\newcommand\matr[1]{\left(\begin{array}{*{20}{c}} #1 \end{array}\right)}
\newcommand\tfrac[2]{{\textstyle\frac{#1}{#2}}}  
\newcommand\compact{\itemsep=0cm \parskip=0cm}
\newcommand\set[1]{\left\{#1\right\}}
\newcommand\with{\,\,\vrule\,\,}
\newcommand\q[3]{\frac{#1\cdot #2}{\mult_{#3}(#2)}}  
\newenvironment{bycases}{\left\{\begin{array}{@{}l@{\quad}l}}{\end{array}\right.}
\newcommand\eqnref[1]{(\ref{#1})}
\newcommand\newop[2]{\newcommand#1{\mathop{\rm #2}\nolimits}}
\newop\Bl{Bl}
\newop\SD{SD}
\newop\SC{SC}
\newop\rk{rk}
\newop\Vol{Vol}
\newop\Amp{Amp}
\newop\Aut{Aut}
\newop\Nef{Nef}
\newop\Div{Div}
\newop\NS{NS}
\newop\mult{mult}
\newop\End{End}
\newcommand\eps{\varepsilon}
\begin{document}

   \title{Seshadri constants on abelian surfaces}
   \author{\normalsize Maximilian Schmidt}
   \date{\normalsize \today}
   \maketitle
   \thispagestyle{empty}
   \keywords{abelian surface, Seshadri constant, elliptic curve}
   \subclass{14C20, 14H52, 14K12, 26A30}

\begin{abstract}
So far, Seshadri constants on abelian surfaces are completely understood only in the cases of Picard number one and on principally polarized abelian surfaces with real multiplication.
Beyond that, there are partial results for products of elliptic curves.
In this paper, we show how to compute the Seshadri constant of any nef line bundle on any abelian surface over the complex numbers. 
We develop an effective algorithm depending only on the basis of the Néron-Severi group to compute not only the Seshadri constants but also the numerical data of their Seshadri curves.
Access to the Seshadri curves allows us to plot Seshadri functions and better understand their structure.
We show that already in the case of Picard number two the complexity of Seshadri functions can vary to a great degree.
Our results indicate that aside from finitely many cases the complexity of the Seshadri function is at least as high as in the Cantor function.
\end{abstract}


\section*{Introduction}
Recall that for an ample line bundle $L$ on a smooth projective variety $X$, the \emph{Seshadri constant} of $L$ at a point $x\in X$ is by definition the real number
\be
\eps(L,x)=\inf\set{\q LCx\with C\mbox{ irreducible curve through } x}\,.
\ee
On abelian varieties, where this invariant is independent of the chosen point $x$, we write simply $\eps(L)$.
We refer to \cite[Chapt.~5]{Lazarsfeld:PAG}, \cite{Bauer-et-al:primer} and \cite{BL:CAV} for more background on Seshadri constants and abelian surfaces.
So far, Seshadri constants on abelian surfaces are completely understood only in the cases of Picard number one \cite{Bauer:sesh-alg-sf} and on principally polarized abelian surfaces with real multiplication \cite{Bauer-Schmidt}.

The investigation in \cite{Bauer-Schmidt} relies on so-called Pell bounds and on their strong connection with submaximal ample curves.
This tool depends heavily on specificities of abelian surfaces with real multiplication, where the Néron-Severi group is of rank two, the intersection matrices have a simple (known) structure, and there are no elliptic curves to consider.
In the present study, we consider abelian surfaces in general.
We, therefore, need to go beyond \cite{Bauer-Schmidt} in two essential respects:
Firstly, we have to argue without reference to any specific intersection forms, and secondly, we have to take the potential presence of elliptic curves into account.
It turns out that every elliptic curve computes the Seshadri constant of some ample line bundle -- so every elliptic curve has an impact on the shape of the Seshadri function.
To control the influence of elliptic curves, we will develop a method for determining the minimal degree of elliptic curves with respect to a fixed ample line bundle (see Sect.~\ref{sec:sesh-elliptic-case}).
We will show in Sect.~\ref{sec:submaximal-curves-Pell-bounds} how the idea of Pell bounds can be generalized to work with ample and with elliptic curves in this general setting.

By solving the aforementioned problems, we will be able to not only compute Seshadri constants but also Seshadri curves. 
\begin{introtheorem}\label{introthm:seshadri-algorithm}
There is an algorithm that computes the Seshadri constant and all submaximal Seshadri curves of any given nef line bundle on abelian surfaces.
\end{introtheorem}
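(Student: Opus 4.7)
The goal is to reduce the computation of $\eps(L)$, and the enumeration of its submaximal Seshadri curves, to an effective finite search in the N\'eron--Severi group $\NS(X)$. The plan rests on two abelian-surface-specific facts: first, $\eps(L)$ is independent of the base point, so the whole question takes place at the level of numerical classes; second, one has the a priori bound $\eps(L)\le\sqrt{L^2}$, so any irreducible curve $C$ that could compute or lower the Seshadri constant is \emph{submaximal}, meaning $(L\cdot C)/\mult_x(C)<\sqrt{L^2}$. Such a $C$ either satisfies $C^2=0$, in which case, up to numerical equivalence, it is a multiple of an elliptic curve, or $C^2>0$, in which case its class can (after translation) be taken ample. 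This splits the problem into two regimes that can be attacked separately with the tools already developed in the paper.

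I would first dispose of the elliptic regime using the machinery of Section~\ref{sec:sesh-elliptic-case}, which, from a basis of $\NS(X)$ and the intersection form, returns the minimal value of $L\cdot E$ over elliptic curves $E\subset X$ and lists the numerical classes attaining small values of $L\cdot E$. This also takes care of nef but non-ample $L$, for which some $L\cdot E=0$ and hence $\eps(L)=0$. For the ample regime I would invoke the generalized Pell-bound machinery of Section~\ref{sec:submaximal-curves-Pell-bounds}. The Hodge index inequality $(L\cdot C)^2\ge L^2\cdot C^2$ combined with submaximality $(L\cdot C)^2<L^2 m^2$ yields $C^2<m^2$, while the adjunction/genus inequality for an irreducible curve of multiplicity $m$ on an abelian surface forces $C^2\ge m(m-1)-2$. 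These two inequalities pinch $C^2$ into a narrow window depending only on $m$, and the Pell-type bound then confines the admissible triples $(L\cdot C,\,C^2,\,m)$ to an explicitly computable finite set. For each surviving triple one tests effectivity and irreducibility from numerical invariants via Riemann--Roch and the standard classification of classes on abelian surfaces; the minimum of the resulting ratios, compared with the elliptic contributions and with $\sqrt{L^2}$, is $\eps(L)$, and the minimizers are the submaximal Seshadri curves.

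The hardest part, and the reason the approach of \cite{Bauer-Schmidt} does not directly transfer, is setting up the Pell bound intrinsically: without assuming a normal form for the intersection matrix, one must still bound the multiplicity $m$ uniformly in terms of the invariants of $L$ and simultaneously account for the potential presence of elliptic curves. One needs to rule out infinite families of near-minimizers in either regime and to show that their mutual interference cannot reintroduce unbounded candidates. Once this is done, the passage from ample to general nef $L$ is obtained by a standard continuity argument on the nef cone, since every quantity entering the algorithm is a function of the numerical class of $L$ and of the fixed basis of $\NS(X)$.
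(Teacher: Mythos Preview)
Your split into the elliptic and ample regimes is exactly right, and the elliptic side is handled just as the paper does. The gap is in the ample regime: the inequalities you write down do \emph{not} produce a finite candidate set. From Hodge plus submaximality you get $C^2<m^2$, and from the genus bound $C^2\ge m(m-1)-2$; but these are compatible for every $m\ge1$, so the multiplicity is not bounded. Even if you refine to the sharp abelian-surface relation $C^2=m^2-1$ coming from the Pell solution attached to the primitive class of $C$, the pair $(C^2,m)$ runs over infinitely many values as the primitive ample class $P$ with $C\equiv kP$ varies, and in Picard number $\ge 2$ there are infinitely many such $P$. Your sentence ``the Pell-type bound then confines the admissible triples $(L\cdot C,\,C^2,\,m)$ to an explicitly computable finite set'' is precisely the step that needs an argument and does not follow from what precedes it. Likewise, ``testing irreducibility from numerical invariants via Riemann--Roch'' is not available in general; Riemann--Roch tells you about effectivity of a class, not irreducibility of a member.

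What the paper does instead is a volume argument in the cross-section $\mathcal N(A)$ of the nef cone. First (Prop.~\ref{prop:seshadri-curve-submaximal-area}), from any upper bound $R<\sqrt{L^2}$ one constructs an explicit open set $U(L,R)$ of positive volume $\zeta$ on which \emph{every} Seshadri curve of $L$ must remain submaximal. Second (Prop.~\ref{prop:volume}), the submaximality domain $\SD(\pi_P)$ of the Pell bound attached to a primitive ample $P=\sum p_iB_i$ is an ellipsoid of volume $<\Vol(S^{\rho-2})/p_0^{\rho-1}$. Comparing these two volumes bounds $p_0$ and hence traps $P$ in an explicit box in $\NS(A)$, giving the finiteness you need (Cor.~\ref{cor:volume-finite-pell-bounds}). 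The question of which surviving Pell bounds actually come from irreducible submaximal curves is then settled by the purely numerical criterion of Thm.~\ref{thm:pell-bound-submaximal-curve-1-to-1}(iv), not by Riemann--Roch. This volume comparison is the missing idea in your outline; the Hodge and genus inequalities alone cannot replace it.
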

The methods also reveal that the Seshadri constants can be computed entirely from Pell bounds and elliptic curves, which both can be computed solely from the intersection matrix.
Therefore, Seshadri constants only depend on the numerical data of the intersection matrix.
\begin{introcorollary}\label{introcor:seshadri-numerical}
The Seshadri constants on an abelian surface only depend on the numerical data given by the Néron-Severi group.
More explicitly: Seshadri constants only depend on the intersection matrix.
\end{introcorollary}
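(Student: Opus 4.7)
The plan is to deduce the corollary as a direct consequence of Theorem~\ref{introthm:seshadri-algorithm} by tracing what data the algorithm actually consumes. The key observation is that every input it requires is extracted from the intersection pairing on $\NS(X)$, so its output cannot depend on anything beyond the intersection matrix.

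First I would recall the shape of the answer produced by the algorithm: for a nef line bundle $L$ on $X$, the Seshadri constant either attains the submaximal bound, or else it is realized as $(L\cdot C)/\mult(C)$ for a submaximal Seshadri curve $C$. In either case, the computation of the ratio requires only the numerical class $[C]\in\NS(X)$ together with its intersection numbers against $L$. Moreover, the finite list of candidate classes that the algorithm tests is itself produced from the intersection form alone: the generalized Pell bounds of Section~\ref{sec:submaximal-curves-Pell-bounds} cut out a bounded region in $\NS(X)_\R$ defined purely by intersection inequalities, which yields finitely many candidate ample classes, while the method of Section~\ref{sec:sesh-elliptic-case} for determining the minimal $L$-degree of an elliptic curve likewise operates entirely on the level of the intersection matrix.

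It is important that one does not need to verify separately which candidate classes are actually represented by irreducible curves on the particular surface $X$: Theorem~\ref{introthm:seshadri-algorithm} already guarantees that the algorithm returns $\eps(L)$, so this value is simultaneously a correct Seshadri constant and a function of the intersection matrix alone. The corollary then follows formally: given two polarized abelian surfaces $(X_1,L_1)$ and $(X_2,L_2)$ whose N\'eron--Severi groups carry an isometry sending $[L_1]$ to $[L_2]$, the algorithm traverses identical intermediate data and produces the same output, whence $\eps(L_1)=\eps(L_2)$.

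The main conceptual obstacle sits in the elliptic part of the algorithm. For surfaces of Picard number one, or with real multiplication as in \cite{Bauer-Schmidt}, no elliptic curves need to be considered and the claim follows from the Pell bound machinery in a fairly direct manner. In the general setting one must instead certify that both the existence and the minimal $L$-degree of an elliptic curve on $X$ are genuinely numerical quantities, which is exactly what is achieved in Section~\ref{sec:sesh-elliptic-case}. Once this is in place, no further non-numerical information about $X$ enters the algorithm and the corollary is immediate.
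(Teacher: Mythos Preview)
Your proposal is correct, and the underlying ingredients you cite are exactly those the paper uses. The route, however, is not quite the same. You deduce the corollary \emph{a posteriori} from Theorem~\ref{introthm:seshadri-algorithm}, arguing that since the algorithm consumes only the intersection matrix and outputs $\eps(L)$, the value must be numerical. The paper instead establishes the corollary earlier and more directly, at the end of Sect.~\ref{sec:submaximal-curves-Pell-bounds}, from the identity
\[
\eps(L)=\min\Big\{\eps_{\rm ell}(L),\ \inf_{P}\pi_P(L),\ \sqrt{L^2}\Big\}\,,
\]
which follows once one knows that elliptic curves are detected numerically (Prop.~\ref{prop:crit-elli-2}) and that every submaximal ample curve has $\varphi_C=\pi_{\O_A(C)}$ (Prop.~\ref{prop:old-result}, Cor.~\ref{cor:epsamp-and-pellbounds}). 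Each term on the right is manifestly a function of the intersection form, so the corollary drops out before any effectivity or finiteness of candidate classes is established. Your approach buys nothing extra and invokes a stronger result than needed, but it is a perfectly legitimate deduction; the paper's approach has the advantage of isolating the conceptual content (the displayed identity) from the algorithmic content (bounding the search space), and in particular does not require the rationality hypothesis on $L$ that enters the ample part of the algorithm in Sect.~\ref{sec:algo-ample}.
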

Consequently, we can assign \engqq{theoretic} Seshadri functions to any integral quadratic form of dimension $n$ with signature $(1,n-1)$ even if there does not exist an abelian surface with such an intersection matrix.

As an immediate application of the previous Corollary, we see that the global structure of the Seshadri function exhibits a regular behavior that depends on the subgroup of isometries of the Néron-Severi group that leaves the intersection matrix, and hence the Seshadri function, invariant.
This shows that the result from \cite[Thm.~B]{Bauer-Schmidt} holds in fact for all abelian surfaces, regardless of their endomorphism type.

\begin{introcorollary}\label{introcor:isometries}
Let $A$ be an abelian surface and let $G\subset \Aut(\NS(A))$ be the subgroup of isometries with respect to the intersection product that leave the nef cone invariant.
Then the Seshadri function on $A$ is invariant under $G$, i.e., we have $\varepsilon(L)=\varepsilon(\varphi(L))$ for all $\varphi\in G$ and $L\in\Nef(A)$.

Furthermore, $G$ gives rise to a decomposition of the ample cone into subcones such that $G$ acts transitively on the set of subcones and, therefore, the Seshadri function is completely determined by the values of any such subcone.
\end{introcorollary}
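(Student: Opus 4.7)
The plan is to reduce both assertions directly to Corollary~\ref{introcor:seshadri-numerical}. For the invariance, fix a basis $E_1,\dots,E_n$ of $\NS(A)$ and let $M = (E_i\cdot E_j)_{i,j}$ be the Gram matrix. Given $\varphi \in G$, the classes $E'_i := \varphi(E_i)$ form another basis of $\NS(A)$, and the isometry condition yields $E'_i \cdot E'_j = E_i \cdot E_j$, so the Gram matrix in the basis $(E'_i)$ is again $M$. For any $L = \sum a_i E_i \in \Nef(A)$, linearity of $\varphi$ gives $\varphi(L) = \sum a_i E'_i$, and $\varphi(L)$ lies in $\Nef(A)$ by the defining property of $G$. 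Thus $L$ expressed in $(E_i)$ and $\varphi(L)$ expressed in $(E'_i)$ have identical coordinates and share the same Gram matrix, so Corollary~\ref{introcor:seshadri-numerical} immediately yields $\varepsilon(L) = \varepsilon(\varphi(L))$.

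For the decomposition, I would use a fundamental-domain argument. The group $G$ is a subgroup of $\Aut(\NS(A)) \subset GL_n(\Z)$ and is therefore discrete. Since $G$ preserves the intersection form of signature $(1,n-1)$ together with the connected component of the positive cone containing $\Nef(A)$, its action descends to a properly discontinuous action on the associated hyperbolic $(n-1)$-space. Classical fundamental-domain theory (e.g.\ Dirichlet or Ford domains) then furnishes a closed convex subcone $D \subset \Amp(A)$ whose $G$-translates cover $\Amp(A)$ and meet only along lower-dimensional boundary pieces. The collection $\{\varphi(D) : \varphi \in G\}$ is the desired decomposition: $G$ acts transitively on it by construction, and by the first part $\varepsilon$ is constant on $G$-orbits, so the values of $\varepsilon$ on $D$ determine it on all of $\Amp(A)$.

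The delicate point is the second part: the group $G$ can easily be infinite---already the real-multiplication case of \cite{Bauer-Schmidt} produces infinite isometry groups---so a naive finite averaging or compactness argument is unavailable. The key input is that a discrete subgroup of the orthogonal group of an integral form of signature $(1,n-1)$ acts properly discontinuously on the corresponding hyperbolic space; once this is granted, the existence of a well-behaved fundamental subcone is standard. The invariance part, by contrast, is essentially formal given Corollary~\ref{introcor:seshadri-numerical}, amounting to the observation that an isometry of $\NS(A)$, viewed as a change of basis, leaves the numerical data of a class unchanged.
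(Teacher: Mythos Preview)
Your proposal is correct and matches the paper's approach. The paper treats the invariance as an immediate consequence of Corollary~\ref{introcor:seshadri-numerical} (exactly your first paragraph) and, for the decomposition, simply refers to \cite[Thm.~B]{Bauer-Schmidt} with the remark that the argument there carries over verbatim once Corollary~\ref{introcor:seshadri-numerical} is available; you have spelled out the underlying fundamental-domain argument that the paper leaves to that reference.
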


We will see that -- on \emph{any} smooth projective surface -- the Seshadri function is locally piecewise linear around line bundles $L$ with $\varepsilon(L,x)<\sqrt{L^2}$.
On the other hand, we know from \cite{Bauer-Schmidt} that its global structure can be quite intricate (with complexity similar to the Cantor function).
We show that piecewise linear Seshadri functions are surprisingly rarer than one might expect.

\begin{introproposition}
Let $A$ be an abelian surface such that the Seshadri function is piecewise linear.
Then $A$ is either a simple abelian surface with Picard number one or is a non-simple abelian surface with Picard number two.
\end{introproposition}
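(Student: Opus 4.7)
The plan is to rule out the two complementary cases: (I) $A$ is simple with $\rho:=\rho(A)\ge 2$, and (II) $A$ is non-simple with $\rho\ge 3$. The proposition then follows by enumerating the remaining Picard-number/simplicity combinations.

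Case (II) is handled via infinitely many elliptic curves. A non-simple abelian surface with $\rho\ge 3$ is isogenous to $E\times E$ for some elliptic curve $E$, and therefore carries infinitely many non-proportional elliptic curve classes (graphs of the infinitely many endomorphisms of $E$). By the results of Sect.~\ref{sec:sesh-elliptic-case}, each elliptic curve $F$ computes the Seshadri constant $\varepsilon(L)=L\cdot F$ on the non-empty open locus of the nef cone where $F$ has minimal degree, and distinct classes $[F]$ yield pairwise distinct linear pieces of the Seshadri function. Infinitely many such pieces are incompatible with piecewise linearity.

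For case (I), simplicity rules out elliptic curves, so by Sect.~\ref{sec:submaximal-curves-Pell-bounds} the Seshadri function is shaped entirely by submaximal ample curves arising from integer solutions of Pell-type equations attached to the intersection form. Simple $A$ with $\rho\ge 2$ forces $\End(A)\otimes\Q$ to contain a real-multiplication, quaternionic, or CM-field structure; in each regime the restriction of the intersection form to any rank-$2$ sublattice of $\NS(A)$ containing an ample class is an indefinite integral binary form whose associated Pell equation admits infinitely many solutions. By Corollary~\ref{introcor:seshadri-numerical} the Seshadri function depends only on these numerical data, and the argument of \cite[Thm.~B]{Bauer-Schmidt} applied to such a slice then produces linear pieces of $\varepsilon(\cdot)$ which accumulate in Cantor-like fashion.

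The main obstacle will be case (I) when $\rho\ge 3$: \emph{a priori} the additional Picard directions could introduce new submaximal curves that erase the Cantor-like structure found on a rank-$2$ slice. I would handle this by invoking Corollary~\ref{introcor:seshadri-numerical} to reduce the entire problem to the numerical structure of the full intersection form, and then observing that further submaximal curves can only refine -- never coarsen -- the decomposition of the nef cone into linearity regions, so the accumulation of Pell pieces from the slice persists in the ambient Seshadri function.
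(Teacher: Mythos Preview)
Your handling of case (II) is essentially the paper's argument: infinitely many elliptic curves on a non-simple surface with $\rho\ge 3$, together with Prop.~\ref{prop:elliptic-seshadri-curve-on-cone}, force infinitely many linear pieces.

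Case (I), however, diverges sharply from the paper and contains a genuine gap. The paper's argument is a one-line observation: if $A$ is simple with $\rho\ge 2$, then all Seshadri curves are ample, so a piecewise linear Seshadri function would be $\varepsilon(L)=\min_{i=1}^n \varphi_{C_i}(L)$ with each $C_i$ ample. But then for any boundary class $L'\in\partial\Nef(A)\setminus\{0\}$ we would get $\varepsilon(L')=\min_i(L'\cdot C_i)/\mult_0 C_i>0$, contradicting $\varepsilon(L')\le\sqrt{L'^2}=0$. No Pell equations, no Cantor sets, no restriction to sublattices.

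Your route through rank-$2$ slices has a real problem beyond being over-engineered. The Pell bounds you extract from a sublattice $\Lambda\subset\NS(A)$ give an auxiliary function $\varepsilon_\Lambda$, and you want its Cantor-like structure to descend to the true restriction $\varepsilon|_\Lambda$. But $\varepsilon|_\Lambda\le\varepsilon_\Lambda$, and taking the minimum with \emph{additional} linear functions (coming from curves whose class lies outside $\Lambda$) can \emph{erase} accumulation, not preserve it: a single curve $C$ with $\varphi_C<\varepsilon_\Lambda$ on a neighborhood of an accumulation point wipes out the entire Cantor structure there and replaces it by one linear piece. So your claim that ``further submaximal curves can only refine --- never coarsen --- the decomposition'' is exactly backwards in the direction you need. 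Even granting the (non-trivial) extension of \cite[Thm.~B]{Bauer-Schmidt} beyond principally polarized real-multiplication surfaces, the slice argument does not close.
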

Moreover, we will see that the converse is false since there are non-simple abelian surfaces of Picard number two whose Seshadri functions are not piecewise-linear (see example.~\ref{ex:non-piecewise-linear}).
In fact, we conjecture that there are only finitely many intersection matrices in Picard number two that yield a piecewise linear Seshadri function (see Conj.~\ref{conj:finitely-piecewise}).

The question arises as to what \engqq{simple} structures for the Seshadri function are possible when it is not globally piecewise linear.
One simple structure would emerge if it were to happen that the Seshadri function restricted to any subcone given by the decomposition from Cor.~\ref{introcor:isometries} was linear or at least piecewise linear.
By \cite{Bauer-Schmidt} this cannot occur on principally polarized abelian surfaces with real multiplication since on these surfaces the Seshadri function has infinitely many linear segments on each subcone given by Cor.~\ref{introcor:isometries}.
However, we will show that it does occur on self-products $E\times E$ of elliptic curves $E$ without complex multiplication, as the ample cone on these surfaces has a decomposition such that the restriction of the Seshadri function is linear and each subcone generates the whole Seshadri function via automorphisms of the Néron-Severi group (see Example~\ref{ex:product-elliptic}).
For a discussion of another simple structure of the Seshadri function, see Example~\ref{ex:non-piecewise-linear}.

The algorithm provided for the proof of Thm.~\ref{introthm:seshadri-algorithm} allows us to plot Seshadri functions in Picard number two and further illustrate and study their structure.
In Sect.~\ref{sec:structure-pic-2} we will shed light on the question as to what other structures can occur besides the known cases of piecewise linear and broken linear functions \cite{Bauer-Schmidt}, and how often they occur, based on computational evidence.
In Picard number two, the Seshadri function restricted to a specific compact cross section of the Nef cone yields a real function $\varepsilon:[-1,1]\to \R_{\geq 0}$ (see sect.~\ref{sec:computing-seshc}).
We will show that $\varepsilon$ is locally piecewise linear aside from a nowhere dense subset $Z\subset [-1,1]$.
We may think of the set $Z$ as a measure of the complexity of the Seshadri function.
There are cases where $Z$ is empty and, therefore, the Seshadri function is globally piecewise linear.
However, computational data that we were able to obtain with our algorithm suggest the following:
\begin{introconjecture}
Let $A$ be an abelian surface with Picard number two.
\begin{itemize}\compact
\item[(i)]
If $A$ is simple, then $Z$ is perfect.
\item[(ii)]
If $A$ is non-simple, then $Z$ is either perfect or finite with $|Z|\in \set{0,1,2}$.
Moreover, there are only finitely many intersection matrices such that $Z$ is finite.
\end{itemize}
\end{introconjecture}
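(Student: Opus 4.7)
The strategy is to translate the description of $Z$ into the combinatorics of Seshadri curves provided by Theorem~\ref{introthm:seshadri-algorithm}, and then analyse the two possible sources of submaximal curves -- Pell-type ample curves and elliptic curves -- separately.

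First, the plan is to characterise points of $Z$ as follows: a parameter $t\in [-1,1]$ belongs to $Z$ precisely when no single Seshadri curve realises $\eps$ on an open neighbourhood of $t$; equivalently, every neighbourhood of $t$ meets the cross-section projections of infinitely many distinct submaximal curves. Since the algorithm of Theorem~\ref{introthm:seshadri-algorithm} produces at every point of the cross section only finitely many Pell-type ample Seshadri classes together with at most two elliptic classes, every $t\in Z$ must be an accumulation point of Pell-type ample Seshadri classes.

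For part (i), on a simple abelian surface with Picard number two there are no elliptic curves, so every submaximal Seshadri curve comes from a Pell bound on the rank-two intersection form. Generalising the density arguments of \cite{Bauer-Schmidt} from the real-multiplication setting to arbitrary integral quadratic forms of signature $(1,1)$, I would prove that the submaximal Pell classes form an infinite family parametrised by solutions of a generalised Pell equation, and that their cross-section projections accumulate from both sides at every point where they accumulate at all. This shows $Z$ has no isolated points, and since $Z$ is closed by construction it is perfect.

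For part (ii), a non-simple abelian surface with Picard number two is isogenous to $E_1\times E_2$ with $E_1,E_2$ non-isogenous, and $\NS(A)$ contains exactly two primitive elliptic classes. These give rise to at most two distinguished parameter values in the cross section at which the elliptic-curve regime transitions to the Pell-type regime. The plan is to establish a dichotomy: either the submaximal Pell classes form an infinite accumulating family, in which case the Pell accumulation automatically reaches up to the elliptic transition parameters and $Z$ is perfect; or the Pell classes do not accumulate, in which case $Z$ consists of at most the two elliptic transition parameters and is finite with $|Z|\le 2$. The \emph{moreover} statement then amounts to showing that only finitely many intersection matrices fall into the non-accumulating case; I would approach this via an effective lower bound on the discriminant and the elliptic-curve degrees, beyond which the generalised Pell equation is forced to have enough solutions to produce accumulation, leaving a finite list of exceptional matrices to be verified case-by-case with the algorithm of Theorem~\ref{introthm:seshadri-algorithm}. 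The main obstacle I foresee is establishing the dichotomy rigorously -- in particular, ruling out the scenario that an accumulating Pell family coexists with an isolated elliptic transition -- and then making the accumulation criterion explicit enough to cut down to finitely many exceptional intersection forms.
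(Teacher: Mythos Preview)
The statement you are trying to prove is not a theorem in the paper but an open \emph{conjecture}, supported only by computational evidence from the algorithm of Thm.~\ref{introthm:seshadri-algorithm}. There is therefore no proof in the paper to compare your proposal against; the paper explicitly says that the underlying data \emph{suggest} this behaviour and leaves the statement as Conjecture~\ref{conj:finitely-piecewise} (and its generalisation at the end of Sect.~\ref{sec:structure-pic-2}).

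That said, your proposed strategy has a structural misidentification of what $Z$ is. By Prop.~\ref{prop:local-structur-seshadri-function-n-curves}, the Seshadri function is locally piecewise linear at every $t$ with $\eps(B_0+tB_1)<\sqrt{1-t^2}$, so every point of $Z$ satisfies $\eps=\sqrt{L^2}$. In the non-simple examples of the paper (e.g.\ Example~\ref{ex:non-piecewise-linear}, where $Z=\{0\}$), the unique point of $Z$ is \emph{not} an ``elliptic transition parameter'' but the interior point where no Pell bound and no elliptic curve touches the upper bound $\sqrt{L^2}$. The two elliptic classes sit at the boundary $t=\pm 1$ of the cross-section and are responsible for the outermost linear pieces, not for points of $Z$; the transition between the elliptic regime and the Pell regime is an ordinary piecewise-linear corner, not a failure of piecewise linearity. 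So your dichotomy in part~(ii) -- ``either Pell classes accumulate and $Z$ is perfect, or $Z$ consists of the elliptic transitions'' -- does not match the actual mechanism.

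The genuine difficulty, which the paper does not resolve, is to control the set $\{t:\eps(B_0+tB_1)=\sqrt{1-t^2}\}$: one must show that for all but finitely many indefinite binary forms, this set is perfect (equivalently, that submaximality domains of Pell bounds and elliptic curves never cover a one-sided neighbourhood of any such $t$ while leaving $t$ isolated). Your appeal to ``generalising the density arguments of \cite{Bauer-Schmidt}'' is precisely the missing step -- in that paper those arguments rely on the specific arithmetic of principal polarisations with real multiplication, and no analogue for general binary forms is established here. Likewise, the ``effective lower bound on the discriminant'' you invoke for the \emph{moreover} clause is exactly what is not known; the paper can only verify finitely many small cases by computer (Example~\ref{ex:piecewise-linear}) and conjecture the rest.
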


Throughout we work over the field of complex numbers.


\section{Properties of Seshadri functions on projective surfaces}
For general background on Seshadri constants, we refer to \cite[Chapt.~5]{Lazarsfeld:PAG} and \cite{Bauer-et-al:primer}.
First, we introduce some notations.
Let $S$ be a smooth projective surface.
We denote by $\Amp(S)$ and, respectively, $\Nef(S)$ the subcones in the Néron-Severi vector space $\NS_\R(S):=\NS(S)\otimes \R\cong \R^{\rho(S)}$ containing all ample or, respectively, nef classes of $\NS_\R(S)$.
The definition of Seshadri constants extends naturally to classes of the nef cone which yields a continuous and concave function $\varepsilon_x:\Nef(S)\to \R_{\geq 0}$ (see \cite{Nakamaye:base-loci} or \cite[Thm.~6.2.]{ELMNP:moving-seshadri}).
Since Seshadri constants are homogeneous, it is enough to consider the Seshadri function on a compact cross-section of the nef cone.

Let $D$ be an effective divisor on $S$.
Then $D$ defines for any fixed $x\in D$ a linear function $\varphi_{C,x}(L):=L\cdot D/\mult_x D$ on the Néron-Severi vector space.
We call $D$ \textit{submaximal} (for $L$ at $x$), if the value $\varphi_{C,x}(L)$ is strictly smaller than the general upper bound $\sqrt{L^2}$ for the Seshadri constant $\varepsilon(L,x)$ which is valid for every nef line bundle $L$.
An irreducible curve computing $\varepsilon(L,x)$ is called \textit{Seshadri curve} (of $L$ in $x$).

The open sets in $\Nef(S)$ where curves are submaximal or, more generally, where a given linear function is strictly smaller than the upper bound $L\mapsto \sqrt{L^2}$ will play an important role in computing the Seshadri constants on abelian surfaces.w
In the sequel, we will need the following Lemmas.
We omit their elementary proofs.
\begin{lemma}\label{lem:sg-convex}
Let $S$ be a smooth projective surface and $f:\NS_\R(S)\to \R$ a linear function.
Then the set
\be 
\SC(f)=\set{L\in\Nef(S)\,\vert\, f(L)<\sqrt{L^2}}
\ee
is an open and convex subcone of $\Nef(S)$.
We call $\SC(f)$ the \textit{submaximality cone} of $f$.
\end{lemma}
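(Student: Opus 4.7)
The plan is to verify the three properties separately: open, subcone (closed under positive scaling), and convex. The first two are immediate, and the real content is convexity, which I would reduce to the concavity of $L\mapsto\sqrt{L^2}$ on the nef cone.

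First I would note that the map $L\mapsto f(L)-\sqrt{L^2}$ is continuous on $\Nef(S)$ (the intersection pairing is continuous, the square root is continuous on $\R_{\geq 0}$, and $L^2\geq 0$ for nef $L$). Therefore $\SC(f)$ is the preimage of $(-\infty,0)$ under a continuous function and hence open in $\Nef(S)$. For the subcone property, pick $L\in \SC(f)$ and $t>0$; then $f(tL)=t\,f(L)<t\sqrt{L^2}=\sqrt{(tL)^2}$, so $tL\in\SC(f)$.

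The main step is convexity. Given $L_1,L_2\in\SC(f)$ and $s\in[0,1]$, set $L=sL_1+(1-s)L_2\in\Nef(S)$. By linearity of $f$,
\be[rl]
f(L) &= s\,f(L_1)+(1-s)\,f(L_2) < s\sqrt{L_1^2}+(1-s)\sqrt{L_2^2}.
\ee
It remains to show $s\sqrt{L_1^2}+(1-s)\sqrt{L_2^2}\le\sqrt{L^2}$, i.e.\ that $L\mapsto\sqrt{L^2}$ is concave on $\Nef(S)$. By homogeneity this reduces to the super-additivity statement $\sqrt{(L_1+L_2)^2}\ge\sqrt{L_1^2}+\sqrt{L_2^2}$. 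This is where I would invoke the Hodge index inequality for nef classes on a surface: $L_1\cdot L_2\ge\sqrt{L_1^2}\sqrt{L_2^2}$ whenever $L_1,L_2$ are nef. Expanding $(L_1+L_2)^2=L_1^2+2L_1\cdot L_2+L_2^2$ and applying this bound yields $(L_1+L_2)^2\ge(\sqrt{L_1^2}+\sqrt{L_2^2})^2$, and taking square roots gives what we need.

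The one subtlety to handle carefully is the Hodge-index step when one of the $L_i^2$ vanishes, since the usual statement is phrased for divisors with positive self-intersection. But $L_1,L_2\in\Nef(S)$ already guarantees $L_1\cdot L_2\ge 0$, so the inequality $L_1\cdot L_2\ge\sqrt{L_1^2}\sqrt{L_2^2}$ is trivial in that degenerate case; no obstacle remains. Putting it together, $f(L)-\sqrt{L^2}$ is a convex function on $\Nef(S)$, its strict sublevel set $\SC(f)$ is convex, and combined with openness and the cone property we obtain the lemma.
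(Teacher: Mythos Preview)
Your argument is correct and complete: openness via continuity, the cone property via homogeneity, and convexity via the concavity of $L\mapsto\sqrt{L^2}$ on $\Nef(S)$, which you deduce from the Hodge index inequality $(L_1\cdot L_2)^2\ge L_1^2\,L_2^2$ for nef classes together with $L_1\cdot L_2\ge 0$; your treatment of the degenerate case $L_i^2=0$ is also fine. The paper itself does not supply a proof---it explicitly omits it as elementary---so there is nothing to compare against, but what you have written is exactly the standard and expected argument.
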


\begin{lemma}\label{prop:infimum-continuous}
Let $W\subset \R^g$ be a closed convex set and $M\in\R$.
Let ${\cal F}$ be a non-empty family of linear functions such that
\be 
{\cal F}\subset \set{\ell:\R^g\to \R\,\,\vert\,\, \ell \mbox{ is linear and } \ell(w)\geq M \mbox{ for all } w\in W }\,.
\ee
Then the function
\be 
f:W\to \R\,,\qquad w\mapsto \inf\set{\ell(w)\,\vert\, \ell\in\cal F}
\ee
is homogeneous, concave and continuous.
\end{lemma}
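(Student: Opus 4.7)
I would handle the three properties separately: homogeneity and concavity are immediate consequences of linearity, while continuity is the only substantive point, and only really delicate at boundary points of $W$.

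For homogeneity, each $\ell\in\mathcal F$ satisfies $\ell(\lambda w)=\lambda\ell(w)$ for $\lambda\ge0$, and the infimum commutes with multiplication by a non-negative scalar, so $f(\lambda w)=\lambda f(w)$ whenever $\lambda w\in W$. For concavity, given $w_1,w_2\in W$ and $t\in[0,1]$, linearity gives
\[
\ell(tw_1+(1-t)w_2)=t\ell(w_1)+(1-t)\ell(w_2)\ge tf(w_1)+(1-t)f(w_2)
\]
for every $\ell\in\mathcal F$, and taking the infimum on the left yields concavity of $f$.

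For continuity, the bound $\ell(w)\ge M$ on $W$ forces $f\ge M$, so $f$ is real-valued. Upper semicontinuity is cheap: given $w_0\in W$ and $\delta>0$, choose $\ell_\delta\in\mathcal F$ with $\ell_\delta(w_0)<f(w_0)+\delta$; continuity of the single linear function $\ell_\delta$ then gives $f(w)\le\ell_\delta(w)<f(w_0)+2\delta$ throughout a neighbourhood of $w_0$. Lower semicontinuity is where concavity enters: at a point $w_0$ in the relative interior of $W$ one can write every nearby $w$ as a convex combination $w=\alpha w_0+(1-\alpha)v$ with $v$ on a fixed small sphere around $w_0$ and $\alpha\to1$ as $w\to w_0$, and concavity together with the lower bound $M$ then yields
\[
f(w)\ge \alpha f(w_0)+(1-\alpha)f(v)\ge \alpha f(w_0)+(1-\alpha)M\longrightarrow f(w_0).
\]

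The main obstacle is lower semicontinuity at boundary points of $W$, where the convex-combination representation above is not directly available in every approach direction. The plan there is to reduce to one dimension along segments: for $w_0\in\partial W$, an auxiliary point $w^*\in W$, and a sequence $w_n\to w_0$, restrict $f$ to the segment $[w^*,w_0]$, where it is concave and upper semicontinuous, and use the elementary one-dimensional fact that such a function on a closed interval is continuous at its endpoints. Combining this with the concavity inequality applied to representations of each $w_n$ as a convex combination of $w_0$ and points of $W$ bounded away from $w_0$ gives $\liminf_n f(w_n)\ge f(w_0)$, which together with upper semicontinuity produces continuity and completes the argument.
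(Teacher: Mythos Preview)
Your arguments for homogeneity, concavity, upper semicontinuity, and continuity on the relative interior of $W$ are correct. The gap is in the last step, lower semicontinuity at a boundary point $w_0$. You propose to represent each $w_n$ as a convex combination of $w_0$ and points of $W$ ``bounded away from $w_0$'', but such a representation need not exist: when both $w_0$ and the $w_n$ lie on a curved portion of $\partial W$, the equation $w_n = t\,w_0 + (1-t)\,v$ with $v\in W$ may force $t\le 0$. The one-dimensional radial continuity you extract from the segment $[w^*,w_0]$ does not control approach along such tangential sequences.

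In fact the continuity claim is false in the stated generality, so this gap cannot be closed. Take $W=\{(x,y)\in\R^2 : y\ge x^2\}$, $M=-1$, and $\mathcal F=\{\ell_n : n\ge 1\}$ with $\ell_n(x,y)=2nx+n^2y$. On $W$ one has $\ell_n(x,y)\ge 2nx+n^2x^2=(nx+1)^2-1\ge -1$, so the hypothesis holds; yet $f(0,0)=0$ while $f(-\tfrac1k,\tfrac1{k^2})=\min_{n\ge 1}\bigl((n/k-1)^2-1\bigr)=-1$ (attained at $n=k$), so $f$ is discontinuous at the origin. A cone version with $M=0$ (matching the intended application to $\Nef(S)$) is obtained by homogenising: with $W=\{(x,y,z):yz\ge x^2,\ y\ge 0,\ z\ge 0\}$ and $\ell_n(x,y,z)=2nx+n^2y+z$ one gets $f(0,0,1)=1$ but $f(-\tfrac1k,\tfrac1{k^2},1)=0$. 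The paper omits the proof; what is actually true---and what suffices for its application in the proof of Proposition~\ref{prop:local-structur-seshadri-function-n-curves}, where only interior points are used---is continuity on the relative interior of $W$, which your argument already establishes.
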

Since we can consider the Seshadri function as the pointwise infimum of linear functions
\be 
\varepsilon_x:\Nef(S)\to \R_{\geq 0}\,,\qquad L\mapsto  \inf\set{\varphi_{C,x}(L)\,\,\vert\,\, C\subset S \mbox{ curve through } x}\,,
\ee
the previous Lemma provides an alternative proof for the continuity of the Seshadri function.

Szemberg showed in \cite[Prop.~1.8]{Szemberg:Hab} that an ample line bundle $L$ has at most $\rho(S)$ many submaximal curves at any given point. 
So, for a fixed line bundle $L$, we can write the Seshadri constant as a minimum over finitely many curves.
We will show that this pointwise behavior of the Seshadri function extends to a neighborhood of $L$ whenever $\eps_x(L)<\sqrt{L^2}$ and, hence, the Seshadri function is locally a piecewise linear function.
However, in the case $\eps_x(L)=\sqrt{L^2}$, the local structure can be surprisingly complex (see \cite[Thm.~3.7, Ch.~4]{Bauer-Schmidt}).
Furthermore, we will see in Prop.~\ref{prop:piecewise-linear} that on abelian surfaces with Picard number three and four the Seshadri function has never the global structure of a piecewise linear function.
\begin{proposition}\label{prop:local-structur-seshadri-function-n-curves}
Let $S$ be a smooth projective surface with Picard number $\rho(S)$, $x\in S$ and $L\in\Nef(S)$.
Suppose that $\eps_x(L)<\sqrt{L^2}$ and let $1\leq n \leq \rho(S)$ be the number of Seshadri curves of $L$ in $x$.
Then there exists an open subcone $U\subset\Nef(S)$ at $L$ such that the Seshadri function restricted to $U$ is the minimum of $n$ linear function, i.e., we have 
\be 
\eps_x|_U:U\to \R\,,\qquad L'\mapsto \min\set{\varphi_{C_i,x}(L')\,\vert\, i=1,\dots,n}\,,
\ee
where $C_1,\dots, C_n$ are the Seshadri curves of $L$ in $x$.
\end{proposition}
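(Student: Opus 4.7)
The plan is to prove that the natural candidate
\[
g(L'):=\min_{i=1,\dots,n}\varphi_{C_i,x}(L')
\]
agrees with $\eps_x$ on some open neighborhood of $L$. Since each $C_i$ passes through $x$, the inequality $\eps_x(L')\le g(L')$ is automatic on all of $\Nef(S)$ with equality at $L'=L$, so the real content is the reverse inequality in a neighborhood of $L$.

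First, I would apply Szemberg's bound cited just before the statement to enumerate \emph{all} curves submaximal for $L$ at $x$: these are $C_1,\dots,C_n$ together with finitely many further curves $D_1,\dots,D_m$ satisfying $\varphi_{D_j,x}(L)>\eps_x(L)=g(L)$. Fix a threshold
\[
\gamma\in\bigl(\eps_x(L),\,\min(\sqrt{L^2},\,\min_j\varphi_{D_j,x}(L))\bigr),
\]
and use continuity of the finitely many linear functions $\varphi_{C_i,x}$, $\varphi_{D_j,x}$ and of $L'\mapsto\sqrt{(L')^2}$ to obtain an open neighborhood $V\subset\Nef(S)$ of $L$ on which $g<\gamma$, every $\varphi_{D_j,x}>\gamma$ and $\sqrt{(L')^2}>\gamma$.

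The main obstacle is to handle all remaining irreducible curves $C$ through $x$ at once: each such $C$ is non-submaximal at $L$, so $\varphi_{C,x}(L)\ge\sqrt{L^2}>\gamma$, and pointwise continuity yields a neighborhood \emph{depending on $C$} where $\varphi_{C,x}(L')>\gamma$; since infinitely many such $C$ exist and the functionals $\varphi_{C,x}$ are not uniformly Lipschitz, a uniformity argument is required. I would obtain it by translating submaximality to the blow-up $\pi:\Bl_xS\to S$: writing $\tilde C=\pi^*C-\mult_xC\cdot E$ for the strict transform, the condition $\varphi_{C,x}(L')<\gamma$ is equivalent to $(\pi^*L'-\gamma E)\cdot\tilde C<0$. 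Since $\gamma<\sqrt{L^2}$, the class $\pi^*L-\gamma E$ has positive self-intersection $L^2-\gamma^2$ and is therefore big, and the same holds for $\pi^*L'-\gamma E$ on a small compact neighborhood $K\subset V$ of $L$. By the Zariski-chamber decomposition of the big cone of $\Bl_xS$, shrinking $K$ ensures that all classes $\pi^*L'-\gamma E$ with $L'\in K$ lie in a single chamber, so the support of the negative part of their Zariski decomposition is a fixed finite set of curves. Consequently
\[
\mathcal S_K:=\set{C\mbox{ irreducible through }x\with\varphi_{C,x}(L')<\gamma\mbox{ for some }L'\in K}
\]
is finite.

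Granted this finiteness, write $\mathcal S_K=\{C_1,\dots,C_n\}\cup\{D_1,\dots,D_m\}\cup\{E_1,\dots,E_p\}$, where each extra curve $E_q$ is non-submaximal at $L$ and hence satisfies $\varphi_{E_q,x}(L)\ge\sqrt{L^2}>g(L)$. A final application of continuity to this enlarged but still finite collection produces an open subcone $U\subset V\cap\mathrm{int}(K)$ containing $L$ on which $\varphi_{E_q,x}(L')>g(L')$ for every $q$, in addition to all earlier conditions. Then for any $L'\in U$ and any irreducible curve $C$ through $x$: if $C\in\mathcal S_K$, one of the chosen bounds gives $\varphi_{C,x}(L')\ge g(L')$; if $C\notin\mathcal S_K$, then $\varphi_{C,x}(L')\ge\gamma>g(L')$. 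Taking the infimum yields $\eps_x(L')\ge g(L')$ on $U$, which together with the opposite inequality gives $\eps_x|_U=\min_i\varphi_{C_i,x}$, as claimed.
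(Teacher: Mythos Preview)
Your argument is correct, up to a small imprecision: the class $\pi^*L-\gamma E$ may sit on a Zariski-chamber wall, so shrinking $K$ need not force the image into a \emph{single} chamber; but local finiteness of the chamber decomposition still ensures that only finitely many chambers are met, and hence $\mathcal S_K$ is finite as you claim.

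The paper takes a substantially more elementary route and avoids the blow-up and Zariski chambers altogether. It splits the competing curves into two packets: the Seshadri curves $C_1,\dots,C_n$, giving $\eps_2:=g$, and \emph{all} remaining irreducible curves through $x$, giving $\eps_1(L'):=\inf_{C\ne C_i}\varphi_{C,x}(L')$. Since each $\varphi_{C,x}$ is linear and nonnegative on $\Nef(S)$, Lemma~\ref{prop:infimum-continuous} makes $\eps_1$ continuous outright; as $\eps_1(L)>\eps_2(L)$ by construction, continuity of these two functions immediately furnishes the neighborhood $U$. Your approach has the virtue of identifying the potentially interfering curves geometrically (as components of negative parts on the blow-up), which is extra information; the paper trades that for a one-line analytic step that handles the infinite family uniformly without ever needing to reduce it to a finite one.
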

\begin{proof}
It follows from \cite[Prop.~1.8]{Szemberg:Hab} that $L$ has finitely many submaximal curves $C_1,\dots, C_m$.
Let $C_1,\dots, C_n$ with $n\leq m$ be the Seshadri curves of $L$ in $x$.
For any other irreducible curve $C$ which is not a Seshadri curve we obtain the following lower bound for the Seshadri quotient of $L$ in $x$ 
\be 
r:=\min\set{\frac{L\cdot C_i}{\mult_x C_i}\,\Big\vert\, i=n+1,\dots, r}\,.
\ee 
In the case $n=m$ there exist no other submaximal curve and we set $r=\sqrt{L^2}$.
We consider the family of linear function
\be
{\cal F}_L=\set{\varphi_{C,x}:\Nef(S)\to \R\,\,\Big\vert\,\, C\subset S \mbox{ is irred. curve through } x \mbox{ and } C\neq C_i,\, i=1,\dots, n}\,.
\ee
It follows from Prop.~\ref{prop:infimum-continuous} that the pointwise infimum of the family ${\cal F}_L$ yields the continuous function
$\eps_1:\Nef(S)\to \R$.
Furthermore, we consider the continuous function
\be 
\eps_2:\Nef(S)\to \R\,,\qquad L'\mapsto \min\set{\varphi_{C_i,x}(L')\,\vert\, i=1,\dots,n}\,.
\ee
Thus, the Seshadri function is given by the pointwise minimum of $\eps_1$ and $\eps_2$.

Since we have $\eps_2(L)<\eps_1(L)$, it follows from the continuity of $\eps_1$ and $\eps_2$ that there exists an open neighborhood $V\subset\Nef(S)$ at $L$ such that $\eps_2(L')<\eps_1(L')$ for all $L'\in V$.
As the Seshadri function is homogeneous, $V$ yields an open subcone $U\subset\Nef(S)$ in which the Seshadri constant is computed by $\eps_2$.
\end{proof}

For every ample line bundle $L$ on a smooth projective surface $S$, we have the general upper bound $\varepsilon_x(L)\leq \sqrt{L^2}$.
To conclude this section, we will consider the case where we have an improved upper bound for the Seshadri constant of an ample line bundle $L$, i.e., $\varepsilon_x(L)\leq R<\sqrt{L^2}$.
The following construction will yield an open subcone on which every Seshadri curve of $L$ in $x$ is submaximal.
This will play a crucial role in Sect.~\ref{sec:algo-ample} for the computation of Seshadri constants.
\begin{proposition}\label{prop:seshadri-curve-submaximal-area}
Let $S$ be a smooth projective surface, $x\in S$ and $L\in \Amp(S)$.
Furthermore, let $0<R<\sqrt{L^2}$ be an upper bound for the Seshadri constant of $L$ in $x$.
Then there exists an open convex cone $U(L,R,x)\subset \Amp(S)$ such that every Seshadri curve $C$ of $L$ in $x$ is submaximal for every line bundle $L'\in U(L,R,x)$, i.e., $U(L,R,x)\subset\SC(\varphi_{C,x})$.
\end{proposition}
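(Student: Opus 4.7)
The plan is to combine three ingredients already available: the strict inequality $\varepsilon_x(L)\le R<\sqrt{L^2}$ forces every Seshadri curve of $L$ at $x$ to be submaximal (i.e., to place $L$ into its submaximality cone); Szemberg's bound \cite[Prop.~1.8]{Szemberg:Hab} cuts the number of Seshadri curves down to at most $\rho(S)$; and Lemma~\ref{lem:sg-convex} provides openness and convexity of each submaximality cone. The only construction needed is then a finite intersection.

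First I would show that Seshadri curves of $L$ at $x$ exist and form a finite set. Existence is a standard consequence of the hypothesis $\varepsilon_x(L)\le R<\sqrt{L^2}$: if the infimum in the definition of $\varepsilon_x(L)$ were not attained, it would equal the general upper bound $\sqrt{L^2}$, contradicting $\varepsilon_x(L)\le R<\sqrt{L^2}$. Finiteness follows from Szemberg's bound, because every Seshadri curve is in particular submaximal for $L$. Write $C_1,\dots,C_n$ for the (finitely many) Seshadri curves of $L$ at $x$.

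Next I would verify that each $\SC(\varphi_{C_i,x})$ contains $L$. Indeed, since $C_i$ is a Seshadri curve,
\[
\varphi_{C_i,x}(L)\;=\;\frac{L\cdot C_i}{\mult_x C_i}\;=\;\varepsilon_x(L)\;\le\;R\;<\;\sqrt{L^2},
\]
so $L\in\SC(\varphi_{C_i,x})$. By Lemma~\ref{lem:sg-convex} each set $\SC(\varphi_{C_i,x})$ is an open convex subcone of $\Nef(S)$. Defining
\[
U(L,R,x)\;:=\;\Amp(S)\;\cap\;\bigcap_{i=1}^n \SC(\varphi_{C_i,x}),
\]
one obtains a finite intersection of open convex subcones, which is itself an open convex cone inside $\Amp(S)$. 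It is nonempty, since it contains $L$, and by construction every Seshadri curve $C_i$ of $L$ at $x$ is submaximal for every $L'\in U(L,R,x)$, i.e., $U(L,R,x)\subset\SC(\varphi_{C_i,x})$ for $i=1,\dots,n$, as required.

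The hardest step is the first one: guaranteeing both the existence and the finiteness of Seshadri curves. Existence is really an attainment-of-infimum statement and requires the strict inequality hypothesis; without Szemberg's cardinality bound the intersection in the definition of $U(L,R,x)$ would be over an \emph{a priori} infinite family, and the resulting set might fail to be open. The remaining steps (openness, convexity, and the cone property of the intersection) are routine given Lemma~\ref{lem:sg-convex}.
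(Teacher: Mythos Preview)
Your argument is correct and considerably shorter than the paper's, but it follows a genuinely different route. You obtain $U(L,R,x)$ as the intersection of $\Amp(S)$ with the finitely many submaximality cones $\SC(\varphi_{C_i,x})$ of the Seshadri curves; the paper instead constructs $U(L,R,x)$ explicitly from $L$, $R$, and the shape of a compact cross-section of the nef cone, \emph{without} any reference to the Seshadri curves themselves. Concretely, the paper restricts to lines through $L$ in each coordinate direction of the cross-section and uses only the two constraints $\varphi_{C,x}(L)\le R$ and $\varphi_{C,x}\ge 0$ on $\Nef(S)$ (valid for \emph{any} Seshadri curve $C$) to determine, via two extremal affine functions, an explicit interval on each line where every such $\varphi_{C,x}$ must lie below $\sqrt{(\cdot)^2}$; the cone over the resulting $2(\rho(S)-1)$ points is $U(L,R,x)$. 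This explicitness is precisely what is needed later: in Sect.~\ref{sec:algo-ample} one has to produce a computable lower bound $\zeta$ for the volume of $U(L,R,x)\cap\mathcal N(A)$ \emph{before} the Seshadri curves are known, so that Cor.~\ref{cor:volume-finite-pell-bounds} can reduce the search to finitely many Pell bounds. Your cone, by contrast, is built from the Seshadri curves (and in fact does not depend on $R$ at all), so while it proves the proposition as stated, it would not feed into the algorithm.
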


\begin{proof}
We consider a compact cross-section of the nef cone with an affine hyperplane passing through $L$.
We identify the compact cross-section $K$ as a subset of $\R^{\rho(S)-1}$ and
for $v\in R^{\rho(S)-1}$ we denote the line bundle associated to $v$ by $M_v$.
Thus, we write
\be 
\varepsilon_x:K\to \R_{\geq 0}\,,\qquad v\mapsto \varepsilon_x(M_v)\,.
\ee
We denote by $M_w$ the line bundle corresponding to $L$.
We restrict the function $\eps_x$ to the line $l_i:=\set{w+te_i\,\with\, t\in\R }$.
This yields the concave function
\be 
\eps_{x,i}:[a_i,b_i]:=K\cap l_i\to \R_{\geq 0}\,,\qquad t\mapsto \varepsilon_x(M_{w+te_i})\,.
\ee
Let $C\subset S$ be a Seshadri curve of $L$ in $x$.
Then $C$ defines the affine linear function
\be 
\varphi_{C,x}:[a_i,b_i]\to \R_{\geq 0}\,,\qquad t\mapsto \frac{M_{w+te_i}\cdot C}{\mult_x C}\,.
\ee
We derive two constraints for the linear function $\varphi_{C,x}\,$:
On one hand, the linear function satisfies $\varphi_{C,x}(0)\leq R$ because it is an upper bound for the Seshadri constant of $L$. 
On the other hand, we have
\be 
\varphi_{C,x}(t)\geq 0\quad \mbox{ for all } t\in [a_i,b_i]
\ee
since the intersection number of $C$ with any nef class is non-negative.

Considering all linear functions satisfying these two conditions, we find that the affine linear functions $g_1$ and $g_2$ defined by the points $(a_i,0)$ and $(0,R)$ and, respectively, $(0,R)$ and $(b_i,0)$ provide the greatest constraint on the interval in which all Seshadri curves are submaximal.
The intersection points of $g_1$ and $g_2$ with the upper bound $t\mapsto \sqrt{M_{w+te_i}^2}$ yield an interval $(t_1,t_2)$ where every Seshadri curve is submaximal.
We illustrate the situation in Fig.~\ref{fig:submaximality-area}.
The constraints ensure that the graph of the function $\varphi_{C,x}$ is inside the gray area and, thus, every Seshadri curve is submaximal on the interval $(t_1,t_2)$.

Applying this construction to each line $l_i$ for $i=1,\dots, \rho(S)-1$ we get $2\rho(S)-2$ points in $K$.
Since $\SC(\varphi_{C_x})$ is a convex cone, it follows that every Seshadri curve is submaximal on the open convex cone $U(L,R,x)\subset\Amp(S)$ which is generated by the $2\rho(S)-2$ points.

\begin{figure}[hbt!]
\center
\begin{tikzpicture}[x=6.0cm,y=2cm]
\draw (-0.75,0.) -- (0.6,0.);
\draw[shift={(-0.75,0)},color=black] (0pt,2pt) -- (0pt,-2pt) node[below] {\footnotesize $a_j$};
\draw[shift={(0.6,0)},color=black] (0pt,2pt) -- (0pt,-2pt) node[below] {\footnotesize $b_j$};

\draw[->,color=black] (-0.15,0) -- (-0.15,1.2) node[above] {$\eps(t)$};
\draw[shift={(0.225,0.95)},color=black] (0pt,3pt) -- (0pt,3pt) node[above] {\footnotesize $t\mapsto \sqrt{M_{w+te_i}^2}$};
\draw[dashed, domain=-0.75:0.6, samples=300] plot (\x, {sqrt(1-1*\x*\x)});

\draw[shift={(-0.15,0.6)},color=black] (2pt,0pt) -- (-2pt,0pt) node[left] {\footnotesize $R$};

\draw (0.45,.3) node {$g_2$};
\draw[color=black](0.6,0)--(-0.4898,0.871837);

\draw (-0.58,.3) node {$g_1$};
\draw[color=black](-0.75,0.)--(0.22448,0.97448);

\draw[dotted](-0.4898,0)--(-0.4898,0.871837);
\draw[shift={(-0.4898,0)},color=black] (0pt,2pt) -- (0pt,-2pt) node[below] {\footnotesize $t_1$};

\draw[dotted](0.22448,0)--(0.22448,0.97448);
\draw[shift={(0.22448,0)},color=black] (0pt,2pt) -- (0pt,-2pt) node[below] {\footnotesize $t_2$};

\draw[shift={(-0.15,0)},color=black] (0pt,2pt) -- (0pt,-2pt) node[below] {\footnotesize $0$};


\fill[gray!40,nearly transparent] (-0.75,0) -- (0.6,0) -- (0.6,1.35) -- (-0.15,0.6) -- (-0.75,1.08) -- cycle;

\end{tikzpicture}
\caption{\label{fig:submaximality-area} Minimal interval $(t_1,t_2)$ on which every Seshadri curve is submaximal.}
\end{figure}
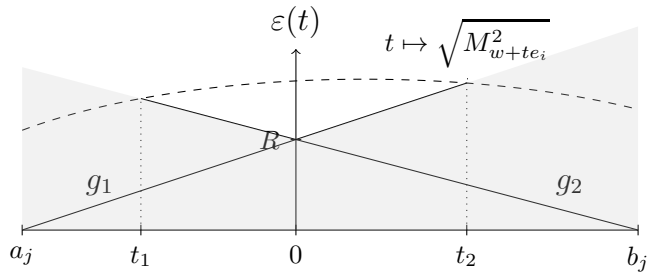

\end{proof}


\section{ Submaximal curves and Pell bounds on abelian surfaces}
\label{sec:submaximal-curves-Pell-bounds}
On abelian surfaces, every non-elliptic curve is ample. 
We can therefore write $\varepsilon(L)$ as
\be
\varepsilon(L)=\min\Big\{\varepsilon_{\rm ell}(L),\varepsilon_{\rm amp}(L),\sqrt{L^2}\Big\}\,,
\ee
where $\varepsilon_{\rm ell}$ denotes the infimum of all \textit{submaximal elliptic} curves passing through $0$ and, respectively, $\varepsilon_{\rm amp}$ denotes the infimum of all \textit{submaximal ample} curves.

\subsection{ Elliptic curves}
In the following, we will clarify the role of elliptic curves in the computation of Seshadri constants.
In \cite{Kani:elliptic} Kani has shown that we can detect elliptic curves solely by their numerical class:
\begin{proposition}[{\cite[Prop.~2.3]{Kani:elliptic}}]\label{prop:crit-elli-2}
Let $A$ be an abelian surface and $D\in\NS(A)$.
Then $D\equiv mE$ for an elliptic curve $E$ and $m\in \Z$ if and only if $D^2=0$.
Moreover, we have $m>0$ if and only if the intersection $H\cdot D$ with an ample line bundle $H$ is positive.
\end{proposition}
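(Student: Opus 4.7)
The plan is to split into the two implications, settling the easy direction by a direct computation on elliptic curves and handling the harder direction by exploiting the polarization homomorphism associated to $D$.

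For the forward implication, I would note that any elliptic curve $E\subset A$ satisfies $E^2=0$: translating $E$ by an element $a\in A\setminus E$ produces an algebraically equivalent curve $E+a$ which is disjoint from $E$, so the self-intersection vanishes. Hence $D^2=m^2E^2=0$. The sign statement is then immediate, since $H\cdot E>0$ for every ample class $H$, so $H\cdot D=m(H\cdot E)$ carries the sign of $m$.

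For the converse, assume $D^2=0$ and pick any line bundle $L$ whose numerical class is $D$. The first step is to observe that Riemann--Roch on an abelian surface gives $\chi(L)=L^2/2=0$, so by the standard identity $\deg\phi_L=\chi(L)^2$ the polarization homomorphism $\phi_L:A\to\hat A$, $x\mapsto t_x^*L\otimes L^{-1}$, fails to be an isogeny. Its kernel therefore has a positive-dimensional connected component $K\subset A$. If $\dim K=2$, then $\phi_L=0$, so $L$ is numerically trivial and one can take $m=0$. Otherwise $K$ is an elliptic curve, and the second step is to argue that $L$ descends along the quotient $\pi:A\to A/K=:E'$ (at least numerically) to a divisor class $D'$ on the elliptic curve $E'$. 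Granted this, the pre-image of a point is a translate $E$ of $K$, whence $D\equiv(\deg D')\cdot E$. The sign claim then follows exactly as in the forward direction, since $m=\deg D'$ and $H\cdot E>0$.

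The hard part will be the descent step in the case $\dim K=1$: the invariance $t_k^*L\cong L$ for $k\in K$ only records that translations by $K$ preserve the isomorphism class of $L$, not a canonical identification $L\cong\pi^*L'$. Kani's approach in \cite{Kani:elliptic} circumvents this by invoking Poincar\'e reducibility to produce an elliptic curve $F\subset A$ complementary to $K$ up to isogeny; one then expresses $D$ in terms of the two fibre classes of the projections from the isogenous product $K\times F$ and, using $D^2=0$, reads off that $D$ must be a numerical multiple of the fibre class of $\pi$, yielding the required elliptic curve and the integer $m$.
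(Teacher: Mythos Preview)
The paper does not supply its own proof of this proposition: it is simply quoted from Kani \cite[Prop.~2.3]{Kani:elliptic} and used as a black box, so there is nothing in the paper to compare your argument against.

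As for the proposal itself, the forward direction and the sign statement are handled correctly. For the converse your strategy via the polarization morphism $\phi_L$ is the standard one and is essentially Kani's own argument: from $D^2=0$ one gets $\chi(L)=0$, hence $\phi_L$ has a positive-dimensional kernel, and the connected component of the identity is either all of $A$ (giving $m=0$) or an elliptic curve $K$. You are right that the delicate point is passing from the translation-invariance $t_k^*L\cong L$ to the numerical statement $D\equiv mE$; your description of how Kani resolves this via Poincar\'e reducibility and the fibre classes on an isogenous product is accurate in outline, though in the general case (when the complementary curve $F$ is isogenous to $K$) the N\'eron--Severi group of $K\times F$ has rank larger than two, so one needs a slightly more careful argument than ``express $D$ in the two fibre classes''. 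Kani's actual computation keeps track of the restriction of $L$ to the fibres of $A\to A/K$ to extract the integer $m$ directly. Apart from that mild imprecision in the final sentence, the proposal is sound.
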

This means that the primitive elements of the Néron-Severi group on the boundary of the nef cone are classes of elliptic curves.
Therefore, we can describe elliptic curves and $\varepsilon_{\rm ell}$ only by the numerical data of the Néron-Severi group.

Elliptic curves play a crucial role in the computation of Seshadri constants.
As it turns out, every elliptic curve passing through $0$ computes the Seshadri constants in an open subcone and, therefore, contributes to the Seshadri function.
\begin{proposition}\label{prop:elliptic-seshadri-curve-on-cone}
Let $A$ be an abelian surface and $E$ an elliptic curve passing through $0$.
Then we have:
\begin{itemize}\compact
\item[(i)] $E$ computes the Seshadri constant of $\O_A(E)$ and $E$ is the only weakly-submaximal curve of $\O_A(E)$.
\item[(ii)] There exists an open subcone in $U\subset \Amp(A)$ such that $E$ computes the Seshadri constant for every $L\in U$.
\end{itemize}
\end{proposition}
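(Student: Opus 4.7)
My plan is to split the statement into (i) and (ii). Part (i) follows from a direct computation since $L=\O_A(E)$ has $L^2=0$, while part (ii) reduces to a key multiplicity bound combined with a simple perturbation argument.

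For (i), observe first that $\sqrt{L^2}=0$, so a curve $C$ through $0$ is weakly submaximal iff $\varphi_{C,0}(L)=(L\cdot C)/\mult_0 C \le 0$. Since $L$ is nef and $\mult_0 C \ge 1$, this forces $E\cdot C=0$. For any irreducible $C\ne E$ through $0$, however, the scheme $E\cap C$ is zero-dimensional of length $E\cdot C$, and because $E$ is smooth at $0$ the local intersection multiplicity of $E$ and $C$ at the origin is at least $\mult_0 E \cdot \mult_0 C = \mult_0 C \ge 1$. Thus $E\cdot C \ge 1$ and $C$ fails to be weakly submaximal. So $E$ is the unique weakly submaximal curve, and $\varphi_{E,0}(L)=0$ combined with $\varepsilon(L)\ge 0$ gives $\varepsilon(L)=0$, realised by $E$.

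For (ii) the key ingredient is the bound
$$\mult_0 C \le E\cdot C \mbox{ for every irreducible } C\ne E \mbox{ through } 0,$$
which is exactly the same local-multiplicity inequality already used in (i). Granted this, fix any ample class $M$ and set $L_s:=\O_A(E)+sM$ for $s>0$; each $L_s$ is ample as the sum of a nef and an ample class.

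The decisive estimate is now elementary: one has $\varphi_{E,0}(L_s)=s\,M\cdot E$, whereas for any irreducible $C\ne E$ through $0$ the multiplicity bound gives
$$\varphi_{C,0}(L_s) = \frac{E\cdot C + s\,M\cdot C}{\mult_0 C} \;\ge\; \frac{E\cdot C + s\,M\cdot C}{E\cdot C} \;\ge\; 1.$$
Hence whenever $s<1/(M\cdot E)$ we have $\varphi_{E,0}(L_s)<1\le \varphi_{C,0}(L_s)$ for every competing $C$, so $E$ computes $\varepsilon(L_s)=s\,M\cdot E$. Letting $M$ range over $\Amp(A)$ and $s$ over the corresponding interval produces an open subset of $\{L\in\Amp(A):\varepsilon(L)=L\cdot E\}$; by homogeneity of $\varepsilon$ the latter locus is a cone, and its interior delivers the required open subcone $U$. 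The only genuine work lies in the multiplicity bound $\mult_0 C \le E\cdot C$; everything else reduces to the single-line estimate above.
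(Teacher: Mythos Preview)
Your proof is correct, and for part (ii) it takes a genuinely different and more elementary route than the paper.

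For (i) both arguments are essentially the same. For (ii), the paper proceeds indirectly: it chooses an ample $H$ with $(H\cdot E)^2<H^2$, restricts to the ray $H_t=E+tH$, and then uses Szemberg's bound on the number of submaximal curves at $H_1$ to reduce to finitely many competitors on the whole segment $t\in(0,1)$; a continuity argument then shows that near $t=0$ none of these finitely many curves except $E$ itself can be submaximal, and Prop.~\ref{prop:local-structur-seshadri-function-n-curves} converts this into an open subcone. Your argument bypasses all of this by exploiting the smoothness of $E$: the single inequality $\mult_0 C\le E\cdot C$ (valid because the local intersection multiplicity at $0$ already dominates $\mult_0 C$) gives the uniform bound $\varphi_{C,0}(E+sM)\ge 1$ for \emph{every} competitor simultaneously, so no finiteness or continuity is needed. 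This is shorter and avoids both Szemberg's result and Prop.~\ref{prop:local-structur-seshadri-function-n-curves}. The paper's approach, in exchange, uses only the coarser facts $E^2=0$ and $E\cdot C>0$, so it would still work for a curve that is not known to be smooth.

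One small point of presentation: your concluding sentence is a bit compressed. It is worth noting explicitly that
\[
\{E+sM:\ M\in\Amp(A),\ 0<s<1/(M\cdot E)\}=\{L:\ L-E\in\Amp(A),\ L\cdot E<1\},
\]
which is visibly open in $\Amp(A)$; the cone it generates (or, equivalently, an application of Prop.~\ref{prop:local-structur-seshadri-function-n-curves} at any single $L_s$) then gives the desired open subcone.
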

\begin{proof}
The first statement follows immediately from the fact that $E^2=0$ and that for every other curve $C$ through $0$ we have $E\cdot C>0$.

For the second, more subtle statement, we claim that there exists an ample line bundle $L$ such that $E$ is the only Seshadri curve of $L$.
Then, by applying Prop.~\ref{prop:local-structur-seshadri-function-n-curves} we obtain an open subcone $U$ at $L$ where $E$ computes the Seshadri constants.

To find such a line bundle $L$, we start with an ample line bundle $H$ which satisfies $(H\cdot E)^2 < H^2$.
(Note that for any given ample line bundle $\tilde{H}$ we can chose the line bundle $H:=\tilde{H}+(\tilde{H}\cdot E)\O_A(E)$.)
We consider the Seshadri function restricted to the ray $\set{H_t:=E+tH\,\with\, t\geq 0}\subset\Nef(A)$ and write
\be 
\eps:[0,\infty)\to \R\,,\qquad t\mapsto \eps(H_t)\,,
\ee
and, respectively, 
\be
\varphi_C:[0,\infty)\to \R,\qquad t\mapsto \frac{H_t\cdot C}{\mult_0 C}
\ee
for the linear function of a curve $C$ restricted to the ray.

It follows from the choice of $H$ that $E$ is submaximal for $H_1$.
Moreover, since the upper bound $t\mapsto \sqrt{H_t^2}$ is concave, it follows that the $E$ is submaximal for all $H_t$ with $t\in (0,1]$.

We claim that there are finitely many Seshadri curves computing the Seshadri constants for all $H_s$ with $s\in (0,1)$.
For this, we show that every Seshadri curve $C$ of $H_s$ with $s\in (0,1)$ is also submaximal for $H_1$.
But by Szemberg \cite[Prop.~1.8]{Szemberg:Hab} the line bundle $H_1$ has at most $\rho(A)$ submaximal curves and, thus, there can only be finitely many Seshadri curves as claimed.
To show that a Seshadri curve $C$ of $H_s$ with $s\in (0,1)$ is also submaximal for $H_1$, we will compare the slopes of the linear functions $\varphi_E$ and $\varphi_C$.
We know that $E$ computes the Seshadri constant of $H_0$ and that $C$ computes the Seshadri constant of $H_s$.
Therefore, we have
\be 
\varphi_E(0)\leq \varphi_C(0) \qquad \mbox{and} \qquad \varphi_E(s) \geq \varphi_C(s)\,.
\ee
This shows that the slope of $\varphi_C$ is less than or equal to the slope of $\varphi_E$ and, hence, we have $\varphi_E(1)\geq  \varphi_C(1)$.
Since $E$ is submaximal for $H_1$, it follows that $C$ is also submaximal for $H_1$ as claimed.
We denote by $C_1,\dots, C_n$ the Seshadri curves which compute the Seshadri constants for all $H_s$ with $s\in (0,1)$.

Lastly, we claim that we have $E=C_i$ for an $i\in\set{1,\dots,n}$ and there even exists a line bundle $H_s$ such that $E$ is the only Seshadri curve.
We assume to the contrary that $C_i\neq E$ for all $i\in\set{1,\dots,n}$.
It follows from (i) that we have
\be 
\varepsilon(H_0)=0=\sqrt{H_0^2}<\varphi_{C_i}(0)=\frac{\O_A(E)\cdot C_i}{\mult_0 C}\qquad \mbox{ for all } i\in\set{1,\dots,n}\,.
\ee
Since the functions $t\mapsto \sqrt{H_t^2}$ and $t\mapsto \min\set{\varphi_{C_i}(t)\,\with\, i=1,\dots,n}$ are continuous, there exists a number $t_0\in(0,1)$ such that none of the curves $C_1,\dots, C_n$ are submaximal for any line bundle $H_s$ with $s\in [0,t_0]$.
This, however, is a contradiction because $C_1,\dots, C_n$ compute the Seshadri constants for all $H_s$ with $s\in (0,1)$.
Thus, the assumption was incorrect and we have $E=C_j$ for a $j\in\set{1,\dots,n}$.
Now, by applying the exact same argument as before to the remaining curves $C_i$ with $i\neq j$, we again find a number $t_0\in (0,1)$ such that none of the curves $C_i$ with $i\neq j$ are submaximal for line bundles $H_s$ with $s\in [0,t_0]$.
Therefore, $E$ is the only Seshadri curve for $H_s$ with $s\in [0,t_0]$.

Thus, we have found ample line bundles $H_s$ such that $E$ is the only Seshadri curve.
It follows from Prop.~\ref{prop:local-structur-seshadri-function-n-curves} that there exists an open subcone at $H_s$ in which $E$ computes the Seshadri constants.
\end{proof}

\subsection{Submaximal ample curves and Pell bounds}
In this section, we show that $\varepsilon_{\rm amp}$ can be computed from Pell bounds, which are constructed purely numerical via Pell equations.
This implies that Seshadri constants on abelian surfaces are a purely numerical problem since $\varepsilon_{\rm ell}$ and $\varepsilon_{\rm amp}$ only depend on the numerical data given by the intersection matrix. 
We recall the definition of Pell bounds.
\begin{definition}
Let $A$ be an abelian surface and $L$ an ample primitive symmetric line bundle such that $\sqrt{L^2}\notin\Z$.
Consider the Pell equation
\be
x^2 - L^2\cdot y^2 = 1
\ee
and denote by $(\ell,k)$ its primitive solution.
Then there exists a divisor $D\in|2kL|^+$ with $\mult_0 D\ge 2\ell$ (see \cite[Thm.~A.1]{Bauer-Szemberg:periods-appendix}).
Any such divisor is called a \emph{Pell divisor} for $L$ and the existence of Pell divisors of $L$ yields the linear function 
\be 
\pi_L:\NS_\R(A)\to \R,\qquad M\mapsto \frac{M\cdot(kL)}{\ell}
\ee
which is locally an improvement of the general upper bound $\sqrt{L^2}$ for the Seshadri constant.
We call $\pi_L$ the \emph{Pell bound} of $L$.

A Pell divisor or Pell bound of $qL$ with $q\in\Q^+$ is by definition Pell bound or Pell divisor or Pell bound of $L$.
\end{definition}

One of the main ingredients for computing Seshadri constants will be the result of \cite[Prop.~1.3]{Bauer-Schmidt} which essentially provides us with the numerical data of submaximal ample curves.
We summarize the result and connect it to Pell bounds.
\begin{proposition}[{\cite[Prop.~1.3]{Bauer-Schmidt}}]\label{prop:old-result}
Let $A$ be an abelian surface and $C\subset A$ a submaximal ample curve.
Then $C$ or $2C$ appears as a unique Pell divisor of $\O_A(C)$ and the numerical data of $C$ is given by the primitive solution of the Pell equation, i.e., the linear function $\varphi_C$ coincides with the Pell bound $\pi_{\O_A(C)}$.
\end{proposition}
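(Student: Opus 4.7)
The plan is to extract the Pell-equation structure from the submaximal ample irreducible curve $C$ via a combination of Hodge-index inequalities, intersection with a symmetric Pell divisor, and the $[-1]^*$-symmetry of the abelian surface. First I would set up the numerical data: let $L_0$ be the primitive class in the ray of $[C] \in \NS(A)$, and write $C \equiv a L_0$, $d_0 := L_0^2$, $m := \mult_0 C$, so that $C^2 = a^2 d_0$. Submaximality of $C$ together with the Hodge index theorem gives $m^2 > a^2 d_0$, so $n := m^2 - a^2 d_0$ is a positive integer. The proposition then amounts to showing $(m, a) = g(\ell_0, k_0)$ with $g \in \{1, 2\}$, where $(\ell_0, k_0)$ is the primitive solution of $x^2 - d_0 y^2 = 1$, together with the identification of $C$ (respectively $2C$) as the Pell divisor.

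Next I would invoke the symmetric Pell divisor $D$ of $L_0$ provided by \cite[Thm.~A.1]{Bauer-Szemberg:periods-appendix}, namely $D \in |2 k_0 L_0|^+$ with $\mult_0 D \geq 2 \ell_0$, and split cases according to whether $C$ is a component of $D$. If $C$ is not a component, intersection at $0$ gives $2 k_0 a d_0 = D \cdot C \geq 2 \ell_0 m$, which combined with $m^2 > a^2 d_0$ and the Pell identity $\ell_0^2 = k_0^2 d_0 + 1$ quickly leads to a numerical contradiction, forcing $C$ to be a component of $D$. Writing $D = s C + D'$ with $s \geq 1$ and $D'$ effective not containing $C$, the same intersection argument applied to $D'$ together with the symmetry and effectivity of $D$ pins down $D' = 0$ and $(s a, s m) = (2 k_0, 2 \ell_0)$, so $(s, g) \in \{(1, 2), (2, 1)\}$: either $C$ itself is the symmetric Pell divisor ($g = 2$, $C$ symmetric under $[-1]^*$), or $2 C = C + [-1]^* C$ is ($g = 1$, $C$ asymmetric).

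Uniqueness of the Pell divisor as $C$ or $2C$ then follows from the irreducibility of $C$ together with the minimality of $(\ell_0, k_0)$: any other symmetric effective divisor in $|2 k_0 L_0|$ with multiplicity $\geq 2 \ell_0$ would share $C$ as a component, and subtracting $C$ would produce a strictly smaller Pell-type solution, contradicting primitivity of $(\ell_0, k_0)$. Finally, the identity $\varphi_C = \pi_{\O_A(C)}$ is an immediate computation: once $(m, a) = g(\ell_0, k_0)$ is known, both linear forms coincide with $M \mapsto M \cdot k_0 L_0 / \ell_0$ on $\NS_\R(A)$.

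The main obstacle will be the second step, specifically the exclusion of a nonzero residual $D'$ and of higher proportionality factors $g \geq 3$. Neither follows from the bare Pell arithmetic: both rely on a delicate interplay between the uniqueness of symmetric divisors realizing the Pell bound and the irreducibility of the submaximal curve $C$.
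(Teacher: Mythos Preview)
This proposition is not proved in the present paper at all: it is quoted verbatim from \cite[Prop.~1.3]{Bauer-Schmidt} and used as a black box. There is therefore no ``paper's own proof'' to compare against; the paper only restates the result and connects it to the language of Pell bounds.

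That said, your outline is essentially the strategy of the original reference, so a brief assessment is still worthwhile. The reduction step---intersecting $C$ with a symmetric Pell divisor $D\in|2k_0L_0|^+$ and using $D\cdot C\ge(\mult_0 D)(\mult_0 C)$ to force $C$ into $D$---is correct and is exactly how the argument starts. Your honestly flagged obstacle is also the genuine crux: ruling out a nonzero residual $D'$ and the possibility $g\ge 3$ does \emph{not} follow from the Pell arithmetic and the intersection inequality alone. In \cite{Bauer-Schmidt} (building on \cite{Bauer:sesh-alg-sf}) the missing ingredient is an adjunction/arithmetic-genus bound for an irreducible curve on an abelian surface with a point of multiplicity $m$: one obtains a sharp inequality of the form $C^2\ge m^2-c$ for a small explicit constant, which combined with $C^2<m^2$ forces $m^2-C^2$ to be so small that $(m,a)$ must already be the primitive Pell solution (or twice it, in the symmetric case). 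Without this genus input your step~2 cannot be completed; you should not expect to close it by ``minimality of $(\ell_0,k_0)$'' alone, since nothing you have written so far excludes, say, $(m,a)=(3\ell_0,3k_0)$ or a residual $D'$ that is itself a multiple of $C$ plus a translate. One further small gap: you implicitly use that $\sqrt{L_0^2}\notin\Z$ so that the Pell equation is nontrivial; this is again a consequence of the genus bound rather than of submaximality alone.
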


Therefore, we can estimate $\varepsilon_{\rm amp}$ in terms of Pell bounds:
\begin{corollary}\label{cor:epsamp-and-pellbounds}
Let $A$ be an abelian surface and $L$ an ample line bundle. Then we have
\be 
\varepsilon(L)\leq \inf\set{\pi_P(L)\,\with\, \pi_P \mbox{ is Pell bound of }P\in\NS(A)}\leq \varepsilon_{\rm amp}(L)
\ee
where equality on both sides holds if and only if $L$ has a submaximal ample Seshadri curve.
\end{corollary}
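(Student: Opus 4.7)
The plan is to handle the two inequalities and the equality characterization separately, relying on the definition of a Pell bound together with Proposition~\ref{prop:old-result}.

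For the first inequality, I would fix any $P\in\NS(A)$ admitting a Pell bound and pick a Pell divisor $D\in|2kP|^+$ with $\mult_0 D\ge 2\ell$, where $(\ell,k)$ is the primitive solution of $x^2-P^2y^2=1$. Writing $D=\sum a_i C_i$ and keeping only the components through $0$, a standard weighted-average argument produces an irreducible $C_i$ through $0$ with $\frac{L\cdot C_i}{\mult_0 C_i}\le \frac{L\cdot D}{\mult_0 D}\le \frac{L\cdot 2kP}{2\ell}=\pi_P(L)$. Hence $\varepsilon(L)\le \pi_P(L)$, and passing to the infimum over $P$ gives the first inequality.

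For the second inequality, if $L$ admits no submaximal ample curve then $\varepsilon_{\rm amp}(L)=+\infty$ and there is nothing to prove. Otherwise, let $C$ be any submaximal ample curve of $L$. By Proposition~\ref{prop:old-result}, $C$ or $2C$ is a Pell divisor of $\O_A(C)$ and the linear function $\varphi_C$ coincides with $\pi_{\O_A(C)}$, so $\frac{L\cdot C}{\mult_0 C}=\pi_{\O_A(C)}(L)$ belongs to the set over which the infimum is taken. Thus $\inf\set{\pi_P(L)}\le \frac{L\cdot C}{\mult_0 C}$, and passing to the infimum over such $C$ yields the second inequality.

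For the equality characterization, the easy direction is that if $L$ has a submaximal ample Seshadri curve $C$, then Proposition~\ref{prop:old-result} gives $\varepsilon(L)=\frac{L\cdot C}{\mult_0 C}=\pi_{\O_A(C)}(L)$, a single value pinched between the two infima, so all three quantities must coincide. For the converse, equality forces $\varepsilon(L)=\varepsilon_{\rm amp}(L)<\infty$, so submaximal ample curves for $L$ exist. Szemberg's bound $\rho(A)$ on the number of submaximal curves (used already in the proof of Proposition~\ref{prop:local-structur-seshadri-function-n-curves}) turns $\varepsilon_{\rm amp}(L)$ into a minimum rather than just an infimum, attained by some submaximal ample $C$; this $C$ satisfies $\frac{L\cdot C}{\mult_0 C}=\varepsilon(L)$ and is therefore a Seshadri curve. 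The only delicate step is this promotion of equality-of-infima to an honest Seshadri curve via Szemberg's finiteness; everything else is a direct unpacking of Proposition~\ref{prop:old-result} and the Pell-bound definition.
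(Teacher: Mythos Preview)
Your proof is correct and follows the same approach as the paper: the first inequality from the existence of Pell divisors, the second from Proposition~\ref{prop:old-result}. The paper itself gives only a one-line justification and leaves the equality characterization implicit; your use of Szemberg's finiteness to promote the infimum $\varepsilon_{\rm amp}(L)$ to an attained minimum is exactly the right way to complete that direction.
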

The first inequality follows immediately from the existence of Pell divisors and the second inequality follows from Prop.~\ref{prop:old-result}.

Since Pell bounds and elliptic curves only depend on the numerical data given by the Néron-Severi group, it follows from the representation
\be
\varepsilon(L)=\min\Big\{\varepsilon_{\rm ell}(L),\varepsilon_{\rm amp}(L),\sqrt{L^2}\Big\}
\ee
that Seshadri constants on abelian surfaces are a purely numerical problem only dependent on the numerical data provided by the Néron-Severi group.
Thus, we have proven Cor.~\ref{introcor:seshadri-numerical} in the introduction.

\subsection{Finding submaximal ample curves via Pell bounds}
In this section, we provide a method to determine whether or not a given Pell bound coincides with a linear function of a submaximal ample curve.
This generalizes the result in \cite[Prop.~3.13]{Bauer-Schmidt} for the special case of principally polarized abelian surfaces with real multiplication to all abelian surfaces, and is used in Sect.~\ref{sec:algo-ample} to compute ample Seshadri curves.
The main problem in extending the statement to all abelian surfaces is the consideration of elliptic curves and how they affect the method.

First, we claim that Pell bounds of different primitive ample classes always define different linear functions.
\begin{lemma}\label{lem:different-pell-bound}
Let $A$ be an abelian surface with Picard number $\rho$, and let $L$ and $M$ be ample primitive symmetric line bundles with $\sqrt{L^2}\,,\sqrt{M^2}\notin\Z$.
Then the linear functions $\pi_L$ and $\pi_M$ coincides if and only if $L\equiv M$.
\end{lemma}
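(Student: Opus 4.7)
The ``if'' direction is immediate, since $\pi_L$ is built purely from the numerical class of $L$ via the intersection form.

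For the nontrivial direction, my plan is to unwind the definition of the Pell bound. By construction, $\pi_L$ is the linear form on $\NS_\R(A)$ obtained by intersecting with the class $\tfrac{k_L}{\ell_L}L$, where $(\ell_L,k_L)$ is the primitive solution of $x^2-L^2y^2=1$; similarly for $\pi_M$. The equation $\pi_L=\pi_M$ therefore says that the two classes $\tfrac{k_L}{\ell_L}L$ and $\tfrac{k_M}{\ell_M}M$ induce the same functional on $\NS_\R(A)$ via the intersection pairing. Since the intersection form on $\NS_\R(A)$ is non-degenerate by the Hodge index theorem, this forces the identity
\be
\tfrac{k_L}{\ell_L}\,L \;=\; \tfrac{k_M}{\ell_M}\,M \quad\mbox{in }\NS_\R(A),
\ee
and in particular $L$ and $M$ are positive real multiples of one another.

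The remaining step is to upgrade positive proportionality to numerical equivalence. Writing $L=\lambda M$ with $\lambda\in\R_{>0}$, the fact that both $L$ and $M$ sit in the lattice $\NS(A)$ forces $\lambda\in\Q$; putting $\lambda=p/q$ in lowest terms gives $qL\equiv pM$ in $\NS(A)$, and primitivity of both $L$ and $M$ then forces $p=q=1$. I expect this last step to be the only subtle point: without the primitivity assumption one could scale $L$ by an integer to obtain a different ample class with a proportional Pell-bound vector. The hypothesis $\sqrt{L^2},\sqrt{M^2}\notin\Z$ enters only to guarantee that the two Pell equations admit nontrivial primitive solutions, so that $\pi_L$ and $\pi_M$ are defined in the first place.
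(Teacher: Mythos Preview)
Your argument is correct and follows essentially the same route as the paper: both use non-degeneracy of the intersection pairing to conclude that $L$ and $M$ are proportional, then invoke primitivity to get $L\equiv M$. The only cosmetic difference is that the paper reads off the proportionality factor $\tfrac{\ell_M k_L}{k_M\ell_L}$ directly (hence rationality is immediate from the Pell solutions being integers), whereas you recover rationality via the lattice argument---both work.
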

\begin{proof}
If $L\equiv M$ holds, then the Pell bounds clearly coincide.

For the converse, let $D_1,\dots, D_{\rho}$ be a basis of the Néron-Severi group with intersection matrix $S$.
We write $L\equiv\sum_{i=1}^{\rho}a_i D_i$ and $M\equiv \sum_{i=1}^{\rho}b_i D_i$.
Then, the linear functions can be written as
\be 
\pi_L\left(\sum_{i=1}^{\rho}r_iD_i\right)= 
\frac{\left( \sum_{i=1}^{\rho}r_iD_i \right)\cdot (k_LL)}{\ell_L}=
\frac{k_L}{\ell_L}\left(a_1, \dots, a_{\rho}\right)
S 
\left(r_1,\dots,r_{\rho}\right)^t
\ee
and 
\be 
\pi_M\left(\sum_{i=1}^{\rho}r_iD_i\right)= 
\frac{\left( \sum_{i=1}^{\rho}r_iD_i \right)\cdot \left(k_MM\right)}{\ell_M}=
\frac{k_M}{\ell_M}\left(b_1, \dots, b_{\rho}\right)
S 
\left(r_1,\dots,r_{\rho}\right)^t
\ee
where $(\ell_L, k_L)$ and $(\ell_M,k_M)$ denote the primitive solutions of the Pell equations given by $x^2-L^2y^2=1$ and $x^2-M^2y^2=1$.
Since $S$ is invertible, there are vectors $v_i\in\R^\rho$ such that $Sv_i=e_i$ for $i=1,\dots, \rho$.
By assumption $\pi_L$ and $\pi_M$ coincide and, thus, we obtain the following identity for the coefficients
\be 
b_i = \frac{\ell_M \, k_L}{k_M \, \ell_L}\cdot a_i \qquad \mbox{ for all } i=1,\dots, \rho\,.
\ee
This shows that the numerical class of $M$ is a rational multiple of the class of $L$.
Since $L$ and $M$ are primitive we have $L\equiv M$. 
\end{proof}
Since the linear function $\varphi_C$ of a submaximal ample curve coincides with the Pell bound $\pi_{\O_A(C)}$, the previous result extends to submaximal curves.
\begin{corollary}\label{cor:submax-and-pell}
Let $A$ be an abelian surface with Picard number $\rho$, $L$ an ample primitive symmetric line bundle with $\sqrt{L^2}\notin\Z$ and $C$ a submaximal ample curve.
Then the linear function $\varphi_C$ and $\pi_L$ coincides if and only if $L\equiv r\O_A(C)$ with $r\in\Q^+$.
\end{corollary}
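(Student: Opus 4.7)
The plan is to extract this corollary directly from the two results just established. By Prop.~\ref{prop:old-result} the linear function $\varphi_C$ of a submaximal ample curve $C$ coincides with the Pell bound $\pi_{\O_A(C)}$, and by Lemma~\ref{lem:different-pell-bound} Pell bounds of primitive symmetric ample classes distinguish those classes. Combining these should give both directions.

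First I would pass to a primitive symmetric representative of the class of $C$. The numerical class of $\O_A(C)$ is $qP$ for some primitive $P\in\NS(A)$ and some $q\in\Z_{>0}$, and on an abelian surface every class in $\NS(A)$ has a symmetric representative, so I may take $P$ to be primitive symmetric. Then the rational-multiple clause in the definition of Pell bounds yields $\pi_{\O_A(C)}=\pi_P$, and the existence of this Pell bound (guaranteed by Prop.~\ref{prop:old-result}) forces $\sqrt{P^2}\notin\Z$.

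Next I would treat the two directions. For the "if" direction, assume $L\equiv r\O_A(C)$ with $r\in\Q^+$. Then $L$ is a positive rational multiple of $P$; since both are primitive, $L\equiv P$, and therefore $\pi_L=\pi_P=\pi_{\O_A(C)}=\varphi_C$ by Prop.~\ref{prop:old-result}. For the "only if" direction, assume $\varphi_C=\pi_L$. Then $\pi_L=\pi_P$, and Lemma~\ref{lem:different-pell-bound} applies to the pair $(L,P)$ and gives $L\equiv P$, whence $L\equiv (1/q)\O_A(C)$ with $1/q\in\Q^+$.

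The only step that requires care, as opposed to routine assembly, is verifying that the hypotheses of Lemma~\ref{lem:different-pell-bound} are in place for $P$, namely primitivity, symmetry, and $\sqrt{P^2}\notin\Z$. Each of these is automatic from the construction of $P$ and the fact that $\pi_P$ exists as a Pell bound, so I do not expect any substantive obstacle beyond this bookkeeping.
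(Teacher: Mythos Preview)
Your proposal is correct and follows exactly the route the paper intends: the paper does not give an explicit proof but simply remarks that since $\varphi_C=\pi_{\O_A(C)}$ by Prop.~\ref{prop:old-result}, the assertion follows from Lemma~\ref{lem:different-pell-bound}. Your write-up is a careful unpacking of precisely this, including the bookkeeping (passing to the primitive symmetric representative $P$ and noting $\sqrt{P^2}\notin\Z$) needed to invoke the lemma.
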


The following result provides a method to find ample Seshadri curves through Pell bounds.
This will be crucial to not only compute single Seshadri constants but to also compute Seshadri curves.
By Prop.~\ref{prop:local-structur-seshadri-function-n-curves} this will give us access to the local structure of Seshadri functions and even plot them.
\begin{theorem}\label{thm:pell-bound-submaximal-curve-1-to-1}
Let $A$ be an abelian surface and let $L$ be an ample primitive symmetric line bundle with $\sqrt{L^2}\notin\Z$.
Then we have the following equivalence:
\begin{enumerate}\compact
\item[\rm(i)] There exists an irreducible curve $C$ such that $C$ or $2C$ appears as a unique Pell divisor of $L$ and the numerical data of $C$ is given by the primitive solution of the Pell equation.
\item[\rm(ii)] There exists an irreducible curve $C\in |nL|$ for an $n\in\N$ which is submaximal for $L$.
\item[\rm(iii)] The Pell bound $\pi_L$ coincides with the Seshadri function in an open subcone $U\subset\Nef(A)$ containing $L$.
\item[\rm(iv)] For any other ample primitive symmetric line bundle $M$ with $\sqrt{M^2}\notin\Z$ and every elliptic curve $E$ we have $\pi_L(L)<\pi_M(L)$ and $\pi_L(L)<\varphi_E(L)$.
\item[\rm(v)] The Pell bound $\pi_L$ coincides with a linear function $\varphi_C$ where $C$ is a submaximal ample curve.
\end{enumerate}
\end{theorem}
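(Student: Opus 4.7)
The plan is to establish the equivalence as a cycle (i)~$\Rightarrow$~(ii)~$\Rightarrow$~(v)~$\Rightarrow$~(iv)~$\Rightarrow$~(iii)~$\Rightarrow$~(i). The first two steps are essentially restatements of results already established: (i)~$\Rightarrow$~(ii) follows because the Pell identity $\ell^2 - L^2 k^2 = 1$ forces $\varphi_C(L) = kL^2/\ell < \sqrt{L^2}$, so the irreducible Pell-divisor component is submaximal for $L$ and lies in $|nL|$ for $n=k$ or $n=2k$; and (ii)~$\Rightarrow$~(v) is immediate from Prop.~\ref{prop:old-result} together with the convention that $\pi_{\O_A(C)} = \pi_L$ whenever $\O_A(C)$ is a rational multiple of $L$.

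The crucial implication is (v)~$\Rightarrow$~(iv). Writing $\O_A(C) \equiv nL$ with $n = m k_L$ and $\mult_0 C = m\ell_L$ (as forced by $\varphi_C = \pi_L$ via Cor.~\ref{cor:submax-and-pell} and the Pell equation for $L$), I would exploit intersection theory against the unconditionally-existing Pell divisors $D_M \in |2k_M M|$ of any other primitive ample symmetric $M$, together with elliptic curves through~$0$. Provided $C$ does not appear as a component of $D_M$ (a technicality forced by the distinct primitive classes of $C$ and $D_M$), one obtains $C \cdot D_M \geq \mult_0 C \cdot \mult_0 D_M$; after substituting the multiplicities and simplifying via the two Pell equations, this yields
\[ \pi_M(L) \;=\; \frac{k_M\, L \cdot M}{\ell_M} \;\geq\; \frac{\ell_L}{k_L} \;>\; \sqrt{L^2} \;>\; \pi_L(L). \]
The analogous intersection $C \cdot E \geq \mult_0 C$ for an elliptic curve $E$ through $0$ gives $\varphi_E(L) \geq \ell_L/k_L > \pi_L(L)$, establishing both strict inequalities of (iv).

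For (iv)~$\Rightarrow$~(iii), Cor.~\ref{cor:epsamp-and-pellbounds} together with the strict inequalities in (iv) pins down $\varepsilon(L) = \pi_L(L) = \varepsilon_{\rm amp}(L)$: no elliptic curve and no other Pell bound can undercut $\pi_L(L)$, and $\sqrt{L^2}$ strictly exceeds it. The corollary then furnishes a submaximal ample Seshadri curve $C$ of $L$; Prop.~\ref{prop:old-result} gives $\varphi_C = \pi_{\O_A(C)}$, and the strict inequality in (iv) applied to the primitive symmetric class of $\O_A(C)$ forces $\O_A(C) \equiv_\Q L$, so that $\varphi_C = \pi_L$ by Cor.~\ref{cor:submax-and-pell}. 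The same argument applies to every Seshadri curve of $L$ at $0$, so Prop.~\ref{prop:local-structur-seshadri-function-n-curves} yields $\varepsilon = \pi_L$ on an open subcone containing $L$.

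Finally, for (iii)~$\Rightarrow$~(i): since $\varepsilon(L) = \pi_L(L) < \sqrt{L^2}$, Cor.~\ref{cor:epsamp-and-pellbounds} produces a submaximal ample Seshadri curve $C$; the inequality $\varphi_C \geq \varepsilon = \pi_L$ holds on an open neighborhood of $L$ with equality at $L$, so the linear function $\varphi_C - \pi_L$ is nonnegative on an open set and vanishes at an interior point, hence vanishes identically. Cor.~\ref{cor:submax-and-pell} then gives $\O_A(C) \equiv_\Q L$, and Prop.~\ref{prop:old-result} identifies $C$ or $2C$ as the unique Pell divisor of $L$. The main obstacle throughout is the implication (v)~$\Rightarrow$~(iv): the intersection-theoretic bound must be executed with possibly reducible Pell divisors $D_M$ and care to exclude the degenerate case where $C$ occurs as a component of $D_M$, a step that lies at the heart of extending \cite[Prop.~3.13]{Bauer-Schmidt} beyond the real multiplication setting.
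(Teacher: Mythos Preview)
Your cycle differs from the paper's (which runs (i)$\Leftrightarrow$(ii)$\Rightarrow$(iii)$\Rightarrow$(iv)$\Rightarrow$(v)$\Rightarrow$(ii)), and the direct intersection-theoretic attack on (v)$\Rightarrow$(iv) is the most interesting step---but the justification you give for its central hypothesis is wrong. You assert that $C$ cannot be a component of the Pell divisor $D_M$ because $C\equiv m k_L L$ and $D_M\equiv 2k_M M$ have distinct underlying primitive classes. That does not follow: $D_M$ is in general reducible, and nothing prevents the irreducible curve $C$ from appearing among its components---all that is required is that $2k_M M - m k_L L$ be the class of an effective divisor (equivalently nef, on an abelian surface), which is perfectly compatible with $M\not\equiv L$. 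Once $C$ is allowed as a component, the local intersection inequality $C\cdot D_M\ge(\mult_0 C)(\mult_0 D_M)$ is unavailable, and with it your displayed chain $\pi_M(L)\ge \ell_L/k_L>\sqrt{L^2}>\pi_L(L)$.

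The gap can be repaired along your lines. Writing $D_M=aC+R$ with $R$ effective and not containing $C$ (and $R\neq 0$ since $M\not\equiv L$), each component $C_i\neq C$ of $R$ through $0$ still satisfies $C\cdot C_i\ge(\mult_0 C)(\mult_0 C_i)$; using $C\equiv m k_L L$ and $\mult_0 C=m\ell_L$ this gives $\varphi_{C_i}(L)\ge \ell_L/k_L$, hence $(L\cdot R)/\mult_0 R\ge\ell_L/k_L>\pi_L(L)$. A mediant argument then yields $\pi_M(L)\ge\varphi_{D_M}(L)>\pi_L(L)$, which suffices for (iv). The paper sidesteps this entirely: it proves (iii)$\Rightarrow$(iv) by a purely local argument (if $\pi_M(L)=\pi_L(L)$ or $\varphi_E(L)=\pi_L(L)$, the two distinct linear functions differ in some direction inside $U$, contradicting $\varepsilon=\pi_L$ there), and then (iv)$\Rightarrow$(v) by taking a Seshadri curve of $L$ and letting the strict inequalities force its primitive class to be $L$---no Pell divisor of any $M\neq L$ is ever intersected with $C$. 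A smaller point: in your steps (iv)$\Rightarrow$(iii) and (iii)$\Rightarrow$(i) you invoke Cor.~\ref{cor:epsamp-and-pellbounds} to \emph{produce} an ample Seshadri curve, but that corollary only does so once both of its inequalities are already known to be equalities; what actually carries your argument is the linearity trick ($\varphi_C-\pi_L\ge 0$ on an open cone with equality at the interior point $L$, hence $\varphi_C=\pi_L$), which applies to \emph{any} Seshadri curve of $L$ and rules out the elliptic case a posteriori.
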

In other words, (vi) shows that a Pell bound $\pi_L$ yields a submaximal ample curve if and only if $\pi_L$ takes an absolute minimum in the set of Pell bounds and linear functions given by elliptic curves.
This criterion will allow us to decide numerically whether or not a Pell bound corresponds to a submaximal curve and provides a method to compute the numerical data of the Seshadri curve.

\begin{proof}
The equivalence of (i) and (ii) is a consequence of Prop.~\ref{prop:old-result}.

Assume that (ii) holds.
The result of Cor.~\ref{cor:submax-and-pell} shows that $\varphi_C$ and $\pi_L$ coincides.
Furthermore, it follows from \cite[Lemma~5.2]{Bauer:sesh-alg-sf} that $C$ is the only submaximal curve of $L$ and, therefore, the Seshadri function is computed by $\varphi_C$ in an open subcone by Prop.~\ref{prop:local-structur-seshadri-function-n-curves}.

Next, assume that (iii) holds. We have to show that there does not exist another Pell bound $\pi_{M}$ with $\pi_M(L)=\pi_L(L)$ or an elliptic curve with $\varphi_E(L)=\pi_L(L)$.
Suppose to the contrary that equality holds in one of the cases.
It follows from Lemma~\ref{lem:different-pell-bound} that the Pell bounds $\pi_L$ and $\pi_M$ do not coincide globally.
Furthermore, a Pell bound can never coincide with the the function $\varphi_E$ of an elliptic curve $E$, because we have $\varphi_E(\O_A(E))=E^2=0$ and $\pi_L(\O_A(E))=(E\cdot kL)/\ell>0$ since $kL$ is ample.
Thus, $\pi_L$ and $\varphi_E$ do not coincide globally.
Since the functions are linear, they can only differ, if there exists a direction such that the directional derivative differs.
This, however, yields a line bundle $L'\in U$ with $\pi_{M}(L')<\pi_L(L')$ or $\varphi_E(L')<\pi_L(L')$, which is a contradiction to (iii).

Now, assume that (iv) holds and let $C$ be a Seshadri curve of $L$.
By assumption every elliptic curve satisfies $\varepsilon(L)\leq \pi_L(L)<\varphi_E(L)$ and, thus, $C$ is a submaximal ample curve.
We denote by $\pi_{L'}$ the unique Pell bound with $\pi_{L'}=\varphi_C$.
Since $C$ and, therefore, $\pi_{L'}$ computes the Seshadri constant of $L$, we have for every other Pell bound $\pi_M$
\be 
\varepsilon(L)=\varphi_C(L)=\pi_{L'}(L)\leq \pi_M(L)\,.
\ee
From our assumption, it follows that $\pi_{L'}=\pi_L$ and, thus, we have shown that $\pi_L$ coincides with the linear function $\varphi_C$.

Lastly, we show the implication (v) $\Rightarrow$ (ii).
We assume that $\varphi_C=\pi_L$ for a submaximal ample curve.
Since we have $\pi_L(L)<\sqrt{L^2}$ it follows that $C$ is submaximal for $L$.
Let $P\in |2kL|$ be a Pell divisor of $L$ with $\mult_0 P\geq 2\ell$.
By definition of the Pell bounds we have 
\be 
\varphi_P(M)=\frac{M\cdot P}{\mult_0 P}\leq \frac{M\cdot (kL)}{\ell}= \pi_L(M)=\varphi_C(M)\qquad \mbox{ for all } M\in\Nef(A)\,.
\ee
This shows that $P$ computes the Seshadri constant of $\O_A(C)$.
Since $C$ is the only submaximal curve of $\O_A(C)$ it follows from the subsequent Lemma that $P=mC$ for some $m\in\N$.

Thus, we have shown that every Pell divisor of $L$ is a multiple of $C$ and, therefore, there is an $n\in\N$ such that $C\in |nL|$.
\end{proof}

\begin{lemma}
Let $A$ be an abelian surface and $L$ an ample line bundle.
Suppose that $D$ is an effective divisor which computes the Seshadri constant of $L$.
Then every component of $D$ passes through $0$ and computes the Seshadri constant of $L$.
\end{lemma}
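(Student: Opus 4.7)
The plan is to write $D=\sum_{i=1}^n a_i C_i$ with $C_i$ the distinct irreducible components and $a_i\in\N$, and to estimate the Seshadri quotient $\varphi_D(L)=L\cdot D/\mult_0 D$ componentwise. Split the components into those passing through $0$, say $C_1,\dots,C_k$, and those not passing through $0$, say $C_{k+1},\dots,C_n$. For the latter $\mult_0 C_i=0$, so $\mult_0 D=\sum_{i=1}^{k} a_i\mult_0 C_i$, while $L\cdot D=\sum_{i=1}^{n} a_i\, L\cdot C_i$.

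Next I would apply the defining inequality of the Seshadri constant to each relevant component. For $i\leq k$ the curve $C_i$ passes through $0$, hence by definition $L\cdot C_i\geq \varepsilon(L)\cdot \mult_0 C_i$. For $i>k$, the curve $C_i$ is irreducible and $L$ is ample, so $L\cdot C_i>0$. Combining these with the assumption $L\cdot D=\varepsilon(L)\cdot \mult_0 D$ gives
\be
\varepsilon(L)\cdot\mult_0 D \;=\; \sum_{i=1}^{n} a_i\,L\cdot C_i \;\geq\; \varepsilon(L)\sum_{i=1}^{k}a_i\mult_0 C_i + \sum_{i=k+1}^{n}a_i\,L\cdot C_i \;=\; \varepsilon(L)\cdot\mult_0 D + \sum_{i=k+1}^{n}a_i\,L\cdot C_i.
\ee
This forces $\sum_{i=k+1}^{n}a_i\,L\cdot C_i \leq 0$, so every summand (each of which is strictly positive) must be absent. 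Hence $n=k$, i.e., every component of $D$ passes through $0$.

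Having established that all components pass through $0$, the inequality $L\cdot C_i\geq \varepsilon(L)\cdot\mult_0 C_i$ holds for every $i$, and summing with weights $a_i$ gives $L\cdot D\geq\varepsilon(L)\cdot\mult_0 D$, with equality by hypothesis. Thus equality must hold in every single term, i.e., $L\cdot C_i=\varepsilon(L)\cdot\mult_0 C_i$ for each $i$. This says precisely that each $C_i$ computes the Seshadri constant of $L$.

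There is no real obstacle here; the only mild subtlety is to justify that components \emph{not} through $0$ contribute strictly positively to $L\cdot D$ (which uses ampleness of $L$ and irreducibility of $C_i$) so that their presence would strictly increase the Seshadri quotient, contradicting the assumption that $D$ already computes $\varepsilon(L)$.
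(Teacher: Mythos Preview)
Your proof is correct and follows essentially the same idea as the paper's: both exploit that removing a component not through $0$ strictly decreases the Seshadri quotient (since $L$ is ample), and that a weighted sum of quotients each $\geq\varepsilon(L)$ can equal $\varepsilon(L)$ only if every individual quotient does. The paper argues by contradiction, peeling off one bad component $C$ and writing $D=C+R$, whereas you handle all components simultaneously via the full decomposition $D=\sum a_iC_i$ and a single chain of inequalities; your version is slightly more direct but the underlying mechanism is identical.
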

\begin{proof}
Let $C$ be an irreducible component of $D$.
Assume that $\mult_0 C=0$, then we write $D=C+R$ for an effective divisor $R$ with $\mult_0 R=\mult_0 D$.
Since the intersection number of $C$ with $L$ is strictly positive, we get
\be
\varepsilon(L)=\frac{L\cdot D}{\mult_0 D}=\frac{L\cdot (C+R)}{\mult_0 R}> \frac{L\cdot R}{\mult_0 R}\,.
\ee
which is a contradiction, since $D$ computes the Seshadri constant of $L$.

Now, suppose to the contrary that there exists an irreducible component $C$ of $D$ which does not compute the Seshadri constant and, thus, we have $\varphi_D(L)<\varphi_{C}(L)$.
Again, we write $D=C+R$ for an effective divisor $R$ and get
\be
\varepsilon(L)=\frac{L\cdot (C+R)}{\mult_0 C + \mult_0 R}=\varphi_D(L)<\varphi_C(L)=\frac{L\cdot C}{\mult_0 C}\,.
\ee
But this implies that
\be
\varepsilon(L) = \frac{L \cdot D}{\mult_0 D} > \frac{L\cdot R}{\mult_0 R}\,,
\ee
which gives the same contradiction as before.
\end{proof}

With Thm.~\ref{thm:pell-bound-submaximal-curve-1-to-1}~(iv) we can find submaximal curves by using isometries.
\begin{corollary}\label{cor:seshadricurves-isometry}
Let $A$ be an abelian surface and let $\psi:\NS(A)\to \NS(A)$ be an isometry with respect to the intersection product that leaves the nef cone invariant.
Then, a curve $C$ is an irreducible submaximal curve if and only if there exists an irreducible submaximal curve $C'$ such that $C'\equiv \psi(\O_A(C))$ and $\mult_0 C'=\mult_0 C$.
\end{corollary}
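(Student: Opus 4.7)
The plan is to reduce to one direction by symmetry, split according to whether the submaximal curve is elliptic or ample, and transfer each type through $\psi$ using its appropriate numerical characterization. Since $\psi^{-1}$ is also an isometry of $\NS(A)$ preserving the nef cone, it suffices to show that if $C$ is an irreducible submaximal curve, then there exists an irreducible submaximal curve $C'$ with $C' \equiv \psi(\O_A(C))$ and $\mult_0 C' = \mult_0 C$. Because $\psi$ preserves self-intersection numbers, the dichotomy $C^2 = 0$ versus $C^2 > 0$ is preserved, so the elliptic and ample cases can be handled separately.

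In the elliptic case $C^2 = 0$, I would apply Kani's criterion (Prop.~\ref{prop:crit-elli-2}) to $D := \psi(\O_A(C))$. The hypotheses are satisfied: $D^2 = 0$ is immediate, and $D \cdot H = \O_A(C) \cdot \psi^{-1}(H) > 0$ for every ample $H$, since $\psi^{-1}(H)$ is nef and meets $C$ positively. Kani then produces an elliptic curve $E'$ and $m \in \Z_{>0}$ with $D \equiv m E'$; since $\O_A(C)$ is primitive in $\NS(A)$ and isometries preserve primitivity, $m = 1$, and translating $E'$ through the origin yields the desired $C'$ with $\mult_0 C' = 1 = \mult_0 C$.

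In the ample case $C^2 > 0$, the main tool is Thm.~\ref{thm:pell-bound-submaximal-curve-1-to-1}, in particular its purely numerical condition (iv). Let $L$ be the primitive ample symmetric class underlying $\O_A(C)$. By Prop.~\ref{prop:old-result}, $\pi_L = \varphi_C$, so $L$ satisfies (iv). The next step is to verify that $\psi$ intertwines the linear functions occurring in (iv): since $\psi(L)^2 = L^2$, the Pell equation for $\psi(L)$ has the same primitive solution $(\ell,k)$, and a direct computation gives $\pi_{\psi(L)}(\psi(M)) = \pi_L(M)$ for every primitive ample symmetric $M$ with $\sqrt{M^2}\notin\Z$, while $\varphi_{E'}(\psi(M)) = \varphi_{\psi^{-1}(E')}(M)$ for every elliptic curve $E'$. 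Combined with the elliptic case (which shows that $\psi$ bijects classes of elliptic curves) and the obvious bijectivity of $\psi$ on primitive ample symmetric classes with non-integer $\sqrt{M^2}$, the strict inequalities in (iv) carry over from $L$ to $\psi(L)$; Thm.~\ref{thm:pell-bound-submaximal-curve-1-to-1}~(ii) then provides an irreducible submaximal ample curve $C'$ for $\psi(L)$.

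The hard part will be promoting this existence of $C'$ to the precise numerical equality $C' \equiv \psi(\O_A(C))$ with $\mult_0 C' = \mult_0 C$. From $\pi_{\psi(L)} = \varphi_{C'}$ together with Prop.~\ref{prop:old-result}, one obtains $\mult_0 C' \in \{\ell, 2\ell\}$ and correspondingly $[C'] \in \{k\psi(L), 2k\psi(L)\}$; the analogous dichotomy holds for $[C]$ and $\mult_0 C$ on the $L$ side. To rule out a mismatch between the two dichotomies (the ``Pell divisor type''), I plan to invoke the uniqueness of the submaximal curve per primitive class from Prop.~\ref{prop:old-result}, applied jointly for $\psi$ and $\psi^{-1}$: any mismatch would, upon pulling back the curve of the opposite type through the appropriate isometry, force the existence of a second irreducible submaximal curve for one of the primitive classes, contradicting uniqueness.
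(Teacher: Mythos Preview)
Your approach is essentially the paper's own: reduce to one direction via $\psi^{-1}$, split into the elliptic and ample cases, handle the elliptic case with Kani's criterion (Prop.~\ref{prop:crit-elli-2}), and handle the ample case by transporting condition~(iv) of Thm.~\ref{thm:pell-bound-submaximal-curve-1-to-1} through the isometry. The paper's proof is in fact the two-sentence version of exactly what you wrote in your first three paragraphs.

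Your final paragraph goes beyond what the paper argues, and the specific mechanism you propose there does not work. You cannot ``pull back the curve of the opposite type through the appropriate isometry'': $\psi$ acts on $\NS(A)$, not on curves, so applying the construction with $\psi^{-1}$ to the situation at $\psi(L)$ just reproduces the existence of a submaximal curve at $L$, which by uniqueness is $C$ itself---no second curve is produced and no contradiction arises. The relation $(\mult_0 C)\,C'\equiv(\mult_0 C')\,\psi(C)$ you derive from $\varphi_{C'}=\varphi_C\circ\psi^{-1}$ is symmetric in the two types and cannot distinguish them. The paper's proof does not address this point at all; it simply asserts that ``the Pell bound of $\psi(\O_A(C))$ has the same numerical properties as the Pell bound of $\O_A(C)$'' and stops. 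If you want to close the gap, the natural route is to argue that whether the unique Pell divisor of a primitive ample $L$ is irreducible or twice an irreducible curve is governed by dimension counts in the linear systems $|kL|$ and $|2kL|^{+}$, which on an abelian surface depend only on $L^2$ and hence are the same for $\psi(L)$---rather than trying to manufacture a second curve.
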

\begin{proof}
If $C$ is an elliptic curve then $\psi(\O_A(C))$ is a primitive nef line bundle satisfying Prop.~\ref{prop:crit-elli-2} and, thus, yields an elliptic curve.
If $C$ is a submaximal ample curve, then the Pell bound of $\psi(\O_A(C))$ has the same numerical properties as the Pell bound of $\O_A(C)$ and, therefore, we can apply Thm.~\ref{thm:pell-bound-submaximal-curve-1-to-1}~(iv).
\end{proof}

\section{Computation of Seshadri constants on abelian surfaces}
\label{sec:computing-seshc}
In this section, we will distinguish between two cases, namely whether the Seshadri constant of an ample line bundle is computed by an elliptic or an ample curve.
For both cases, we will develop a method to effectively compute the Seshadri constant on any ample line bundle on abelian surfaces if the numerical data (intersection matrix) of the Néron-Severi group is given.

The method for computing Seshadri constants on principally polarized abelian surfaces with real multiplication in \cite{Bauer-Schmidt} depends heavily on the simple and known structure of the intersection matrices.
We, therefore, begin by establishing a standardized basis of the Néron-Severi vector space together with a cross-section of the nef cone in which we can perform the calculations without any reference to specific intersection matrices.

Assume that $D_0,\dots, D_{\rho-1}$ is a basis of $\NS(A)$ and that the orientation of the nef cone is known.
The Hodge index theorem yields a base change of the Néron-Severi vector space such that $B_0,\dots, B_{\rho-1}$ is a $\R$-basis of $\NS_\R(A)$ where the intersection matrix is given by
\be
\left(
\begin{array}{cccc}
	1 & 0 & \cdots & 0  \\
	0 & -1 & \cdots  & 0  \\
	\vdots & \vdots & \ddots & \vdots\\
	0 & 0 & \dots & -1
	\end{array}
\right)\in\Z^{\rho\times\rho}\,.
\ee
By replacing $B_0$ with $-B_0$, we can further assume that $B_0$ is ample, which fixes the orientation of the nef cone.
This yields a standardized compact cross-section of the nef cone given by
\be 
\mathcal N(A):=\set{1\cdot B_0 + \sum_{i=1}^{\rho-1} r_iB_i \in\NS_\R(A)\with 1-\sum_{i=1}^{\rho-1} r_i^2\geq 0}\,,
\ee
which corresponds to the unit ball in $\R^{\rho-1}$.

By using a $\R$-valued base change of the Néron-Severi vector space we lose information about the Néron-Severi group.
We will show in Remark~\ref{rem:eps-elliptic} how this information can be recovered.

\subsection{Seshadri constant computed by an elliptic Seshadri curve}
\label{sec:sesh-elliptic-case}
We will first consider the case of an ample line bundle $L\in\NS_\R(A)$ whose Seshadri constant is computed by an elliptic curve.
Finding the Seshadri constant in this case amounts to computing the minimal degree of elliptic curves with respect to $L$.
In Picard number two, this is an easy task, since there are at most two elliptic curves.
However, on non-simple abelian surfaces with Picard number three and four, there are infinitely many elliptic curves and until now it was unclear how to find the minimal degree and the corresponding elliptic curves.
We show how to reduce the number of elliptic curves we have to consider to a finite number, and by taking the minimum over the finite set of elliptic curves, we can compute the Seshadri constant of $L$ and find all elliptic Seshadri curves.

We consider the cross-section $\mathcal N(A)$ of the previous section.
Let $L=\sum_{i=0}^{\rho-1} a_i B_i$ be the representation of $L$ in the basis $B_0,\dots,B_{\rho-1}$.
Then the unique representative of $L$ in $\mathcal N(A)$ is given by $L'=1\cdot B_0 + \sum_{i=1}^{\rho-1} \frac{a_i}{a_0} B_i$.
Since elliptic curves are on the boundary of the nef cone, every elliptic curve $E\subset A$ has a unique representative in $\partial \mathcal N(A)$.
In order to find the smallest intersection number of $L$ with an elliptic curve, we first compute the smallest intersection of $L'$ with \textit{any} line bundle on $\partial \mathcal N(A)$, i.e.,
\be 
r:=\min\set{L'\cdot M\,\with\, M\in\partial\mathcal N(A)}=\min\set{
1-v^T
\left(
\begin{array}{c}
	\frac{a_1}{a_0} \\
	\vdots \\
	\frac{a_{\rho-1}}{a_0}
\end{array}
\right)
\,\with\, v\in S^{\rho-1}}\,.
\ee
This minimum can be computed explicitly (for example with the method of Lagrange multipliers) and is given by
\be 
r=1-\frac{\sqrt{\sum_{i=1}^{\rho-1}a_i^2}}{a_0}\,. 
\ee
Let $E'\in\mathcal N(A)$ be a representative of an elliptic curve $E$ and let $\lambda\in\R_{>0}$ be the number with $E=\lambda E'$.
Then we have
\be 
L\cdot E = (a_0L')\cdot (\lambda E')\geq a_0\lambda r\,.
\ee
Since we focus on elliptic curves which are submaximal for $L$, it follows that $\lambda$ is bounded as follows
\be 
\lambda\leq \frac{\sqrt{L^2}}{a_0r}=\frac{\sqrt{a_0^2-\sum_{i=1}^{\rho-1}a_i^2}}{a_0-\sqrt{\sum_{i=1}^{\rho-1}a_i^2}}\,.
\ee
Therefore, every elliptic curve which is submaximal for $L$ is contained in the compact set
\be 
W_{\rm ell}:=\left[0,\frac{\sqrt{L^2}}{a_0r}\right]
\times 
\left[
-\frac{\sqrt{L^2}}{a_0r},
\frac{\sqrt{L^2}}{a_0r}
\right]^{\rho-1}\,.
\ee
Since the Néron-Severi group is a lattice in $\NS_\R(A)$, there only exist finitely many classes in $W_{\rm ell}\cap \NS(A)$ and the Seshadri constant is given by 
\be 
\varepsilon(L)=\min\set{L\cdot E \,\with\, E\in W_{\rm ell}\cap \NS(A)\setminus\set{0} \mbox{ and } E^2=0}\,.
\ee
Furthermore, every class $E\in W_{\rm ell}\cap \NS(A)$ with $\varepsilon(L)=L\cdot E$ yields a unique elliptic curve passing through $0$ computing the Seshadri constant.

\begin{remark}\label{rem:proper-lb-basechange}
In the previous discussion, it is a priori unclear which elements of $W_{\rm ell}$ belong to the Néron-Severi group because that information got lost by using an $\R$-valued change of bases.
We can resolve this issue by reverting the base change we made at the beginning of the section.
Let $T=(t_{i,j})$ be the base change from $B_0,\dots, B_{\rho-1}\in\NS_\R(A)$ to the base $D_0,\dots, D_{\rho-1}\in\NS(A)$, then the image $T(E)$ is contained in the cube $[-m_{T},m_{T}]^{\rho}$ with
\be 
m_{T}:=\max\set{\frac{\sqrt{L^2}}{a_0r}\cdot \sum_{j=1}^{\rho-1}|t_{i,j}|
\with j=1,\dots,\rho-1}\,.
\ee
Again, there are only finitely many elements to consider, but in this case, we exactly know which elements are in the Néron-Severi group.
\end{remark}

The previous considerations yield the first part of Thm.~\ref{introthm:seshadri-algorithm}:
\begin{theorem}\label{thm:elliptic-sesh}
Let $L$ be an ample line bundle and suppose that $L$ has an elliptic Seshadri curve.
Then there exists an effective algorithm to compute $\varepsilon(L)$ and the numerical data of all elliptic Seshadri curves of $L$.
\end{theorem}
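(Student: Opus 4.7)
The plan is to formalize the explicit construction carried out in the paragraphs preceding the statement into an algorithm and verify its correctness and termination. I would proceed in the following order.

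First, I would change basis via the Hodge index theorem to the standardized basis $B_0,\dots,B_{\rho-1}$ (with $B_0$ ample, intersection form $\mathrm{diag}(1,-1,\dots,-1)$) and write the coordinates of $L$ as $(a_0,\dots,a_{\rho-1})$, so that its representative in the cross-section is $L'=B_0+\sum_{i\ge 1}(a_i/a_0)B_i$. Next I would compute in closed form the minimum
$$r=\min\set{L'\cdot M\,\with\, M\in\partial\mathcal N(A)}$$
via Lagrange multipliers on the unit sphere $S^{\rho-1}$, obtaining $r=1-\sqrt{\sum_{i\ge1}a_i^2}/a_0>0$ (positivity follows from $L$ being in the interior of the nef cone).

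Second, I would exploit the fact that any elliptic curve $E$ through $0$ is represented by a unique $E'\in\partial\mathcal N(A)$ with $E=\lambda E'$ for some $\lambda>0$, and that if $E$ is \emph{submaximal} for $L$ then $L\cdot E<\sqrt{L^2}$. Combined with $L\cdot E\ge a_0\lambda r$, this forces
$$\lambda\le \frac{\sqrt{L^2}}{a_0 r},$$
so every submaximal elliptic class lies inside the compact box $W_{\rm ell}$ displayed before the statement. Translating this bound through the inverse base change $T$ (Remark~\ref{rem:proper-lb-basechange}) yields a compact box in the coordinates of the original integral basis $D_0,\dots,D_{\rho-1}$ of $\NS(A)$; the intersection with the lattice $\NS(A)\cong\Z^\rho$ is then a \emph{finite, effectively enumerable} set of candidate classes.

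Third, I would apply Kani's numerical criterion (Prop.~\ref{prop:crit-elli-2}): a class $E$ in this finite set corresponds to a positive integer multiple of an elliptic curve class if and only if $E^2=0$ and $H\cdot E>0$ for some ample $H$. Restricting to primitive such classes and taking
$$\varepsilon(L)=\min\set{L\cdot E\,\with\, E\in W_{\rm ell}\cap \NS(A),\ E\neq 0,\ E^2=0,\ E\cdot B_0>0}$$
gives the Seshadri constant under the hypothesis that it is computed by an elliptic curve, and every class achieving this minimum yields one of the desired elliptic Seshadri curves after translation so as to pass through $0$.

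The main obstacle I anticipate is the tension between the real-valued orthonormalizing basis $B_i$ (in which the geometry of $\mathcal N(A)$ is transparent and the bounds on $\lambda$ are clean) and the integral basis $D_i$ (in which lattice enumeration is meaningful). This is precisely what Remark~\ref{rem:proper-lb-basechange} resolves by passing the box $W_{\rm ell}$ through the inverse base change and enlarging it to a coordinate cube in the $D_i$-basis, ensuring that the finitely many integer points we must test can actually be listed algorithmically from the given intersection matrix alone.
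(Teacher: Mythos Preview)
Your proposal is correct and follows essentially the same approach as the paper: the theorem is stated immediately after the explicit construction you outline, and the paper's ``proof'' consists of the paragraphs you are formalizing (standardized basis, Lagrange-multiplier computation of $r$, the bound on $\lambda$ yielding the box $W_{\rm ell}$, lattice enumeration, and the base-change fix of Remark~\ref{rem:proper-lb-basechange}). Your added condition $E\cdot B_0>0$ and the restriction to primitive classes make explicit what the paper leaves implicit via Prop.~\ref{prop:crit-elli-2}.
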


\begin{remark}\label{rem:eps-elliptic}\rm
Using the previous method, we can also explicitly compute the minimal degree of elliptic curves with respect to $L$, even when the Seshadri constant of $L$ is not computed by an elliptic curve.
This means, we can compute $\varepsilon_{\rm ell}(L)$ for any ample line bundle by considering the compact set given by
\be 
W_{\rm ell}':=\left[0,\frac{M}{a_0r}\right]
\times 
\left[
-\frac{M}{a_0r},
\frac{M}{a_0r}
\right]^{\rho-1}\,.
\ee
where $M>0$ has to be sufficient large so that at least one elliptic curve is in $W_{\rm ell}'$, e.g. by putting $M=L\cdot E$ for an elliptic curve $E\subset A$.
This computes the minimal degree of elliptic curves with respect to $L$.
\end{remark}

\subsection{Seshadri constant computed by an ample Seshadri curve}\label{sec:algo-ample}
In this section, we will consider the case of an ample line bundle $L\in\NS_\Q(A)$ whose Seshadri constant is computed by an ample curve.
(Note that for the method presented, it is necessary that $L$ is a rational class.)
We will further assume that $\varepsilon(L)<\sqrt{L^2}$ and, thus, the ample Seshadri curves are submaximal.
In Remark~\ref{rem:seshadri-takes-upper-bound} we show how to determine whether $\varepsilon(L)=\sqrt{L^2}$ holds or not.

In \cite{Bauer-Schmidt} Seshadri constants were computed in the special case of principally polarized abelian surfaces by showing that, on one hand, we found an interval $I\subset\R$ on which the Seshadri curve is submaximal, and on the other hand, we showed that there are only finitely many Pell bounds whose submaximality domain contain the interval $I$.
On arbitrary abelian surfaces, the standardized cross-section of the nef cone and the preparations made in Prop.~\ref{prop:seshadri-curve-submaximal-area} and Thm.~\ref{thm:pell-bound-submaximal-curve-1-to-1} will enable us to generalize this approach.

We start by explicitly computing the submaximality domain of Pell bounds in the standardized cross-section of the nef cone $\mathcal N(A)$.
We assume that $P=\sum_{i=0}^{\rho-1} p_i B_i\in\NS(A)$ is an ample primitive symmetric line bundle such that the self-intersection $P^2$ is not a square number, and denote by $(\ell,k)$ the primitive solution of the Pell equation $x^2-P^2y^2=1$.
The Pell bound of $P$ restricted to the cross-section $\mathcal N(A)$ is given by
\be 
\pi_P:\mathcal N(A) \to \R_{\geq 0},\qquad M=1\cdot B_0 + \sum_{i=1}^{\rho-1} r_iB_i\,\mapsto\, \frac{M\cdot (kP)}{\ell}\,=\,\frac{k}{\ell}\left(p_0+\sum_{i=1}^{\rho-1}p_ir_i\right)\,.
\ee
We will estimate the volume of the \textit{submaximality domain} in $\mathcal N(A)$, i.e,
\be 
\SD(\pi_P):=\set{M\in \mathcal N(A)\with \pi_P(M)<\sqrt{M^2}}=\SC(\pi_P)\cap \mathcal N(A)\,.
\ee

\begin{proposition}\label{prop:volume}
Let $P=\sum_{i=0}^{\rho-1} p_i B_i\in\NS(A)$ be an ample primitive symmetric line bundle such that $\sqrt{P^2}\notin\Z$.
Then the volume of ${\SD}(\pi_P)$ is bounded as follows
\be
\Vol({\SD}(\pi_P))<\frac{\Vol(S^{\rho-2})}{p_0^{\rho-1}}\,.
\ee
\end{proposition}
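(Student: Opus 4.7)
The plan is to exhibit $\SD(\pi_P)$ as a subset of an open ellipsoid $\mathcal{E}\subset\R^{\rho-1}$, compute $\Vol(\mathcal{E})$ in closed form, and then compare it to the target bound. Writing $M=B_0+\sum_{i=1}^{\rho-1}r_iB_i\in\mathcal{N}(A)$ and setting $\vec{r}=(r_1,\dots,r_{\rho-1})$, $\vec{p}=(p_1,\dots,p_{\rho-1})$, and $a:=k/\ell$, I have $M^2=1-\|\vec{r}\|^2$ and $\pi_P(M)=a(p_0+\vec{p}\cdot\vec{r})$. Both sides of $\pi_P(M)<\sqrt{M^2}$ are non-negative ($M$ is nef and $P$ is ample), so squaring gives the equivalent condition
\[
a^2(p_0+\vec{p}\cdot\vec{r})^2+\|\vec{r}\|^2<1,
\]
which defines an open ellipsoid $\mathcal{E}\supset\SD(\pi_P)$. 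It thus suffices to bound $\Vol(\mathcal{E})$.

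The volume is obtained by standard linear algebra. The associated quadratic form has matrix $A=I+a^2\vec{p}\vec{p}^{T}$, hence $\det A=1+a^2\|\vec{p}\|^2$ by the matrix determinant lemma. Completing the square (equivalently, Sherman--Morrison) places the center at $\vec{r}^{*}=-\tfrac{a^2 p_0}{1+a^2\|\vec{p}\|^2}\vec{p}$ and rewrites the defining inequality as
\[
(\vec{r}-\vec{r}^{*})^{T}A(\vec{r}-\vec{r}^{*})<\frac{1-a^2P^2}{1+a^2\|\vec{p}\|^2}.
\]
Two identities now collapse the expression: the Pell relation $\ell^2-P^2k^2=1$ yields $1-a^2P^2=1/\ell^2$, and $P^2=p_0^2-\|\vec{p}\|^2$ (from the intersection signature) gives $1+a^2\|\vec{p}\|^2=(1+k^2p_0^2)/\ell^2$. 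Plugging into the ellipsoid-volume formula $\Vol(B^{\rho-1})\cdot(\text{radius}^2)^{(\rho-1)/2}/\sqrt{\det A}$ collapses it to
\[
\Vol(\mathcal{E})=\frac{\Vol(B^{\rho-1})\,\ell}{(1+k^2p_0^2)^{\rho/2}}.
\]

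Using $\Vol(S^{\rho-2})=(\rho-1)\Vol(B^{\rho-1})$, the proposition reduces to the purely numerical inequality
\[
\ell\,p_0^{\rho-1}<(\rho-1)(1+k^2p_0^2)^{\rho/2}.
\]
I would factor the right-hand side as $(\rho-1)\sqrt{1+k^2p_0^2}\cdot(1+k^2p_0^2)^{(\rho-1)/2}$ and finish using two elementary estimates: $\ell\le\sqrt{1+k^2p_0^2}$ (since $P^2\le p_0^2$) and $(1+k^2p_0^2)^{(\rho-1)/2}\ge p_0^{\rho-1}$ (since $k\ge1$). The main obstacle is keeping the product strict: in the degenerate regime $\vec{p}=0$ the first bound becomes an equality, while for $\rho=2$ the factor $(\rho-1)$ contributes no slack. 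A short case check using the strict inequality $1+k^2p_0^2>p_0^2$ (valid for $k\ge1$) shows that whenever one of the two factors becomes tight the other is strict, so the product is strict in every case and the claimed bound follows.
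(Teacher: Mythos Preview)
Your argument is correct and follows essentially the same route as the paper: identify $\SD(\pi_P)$ with an ellipsoid in $\R^{\rho-1}$, compute its volume exactly via the Pell relation, and then bound using $\ell=\sqrt{P^2k^2+1}\le\sqrt{p_0^2k^2+1}$ together with $p_0^2k^2+1>p_0^2$. Two small remarks. First, in this paper $\Vol(S^{\rho-2})$ denotes the $(\rho-1)$-dimensional volume of the ball enclosed by the $(\rho-2)$-sphere (see how it enters the change-of-variables identity in the paper's computation), so your conversion $\Vol(S^{\rho-2})=(\rho-1)\Vol(B^{\rho-1})$ is not the intended reading and the factor $(\rho-1)$ should be dropped; fortunately your two estimates already give the strict inequality $\ell\,p_0^{\rho-1}<(1+k^2p_0^2)^{\rho/2}$ without that slack, since $1+k^2p_0^2>p_0^2$ makes the second estimate strict in every case. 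Second, with the signature $(1,-1,\dots,-1)$ one has $M\cdot P=p_0-\vec{p}\cdot\vec{r}$ rather than $p_0+\vec{p}\cdot\vec{r}$; this only reflects the ellipsoid and does not affect the volume, so the computation stands.
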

\begin{proof}
A class $M=1\cdot B_0 + \sum_{i=1}^{\rho-1} r_iB_i\in\mathcal N(A)$ is a boundary point of the domain ${\SD}(\pi_P)$ if and only if it satisfies the equation
\be 
f(r_1,\dots,r_{\rho-1}):=\pi_P(M)^2-(\sqrt{M^2})^2=\frac{k^2}{\ell^2}\left(p_0-\sum_{i=1}^{\rho-1} p_ir_i \right)^2 - \left(1-\sum_{i=1}^{\rho-1} r_i^2\right)=0\,,
\ee
which describes an ellipsoid in $\R^{\rho-1}$.

The next step is to find an upper bound for the volume of the ellipsoid only dependent on the numerical data given by the line bundle $P$.
For this, we are completing the square of $f$ by translating the ellipsoid by
\be 
d_i:=\frac{p_0p_i k^2}{p_0^2k^2+1}\qquad \mbox{ for } i=1,\dots, \rho-1\,.
\ee 
After some simplification using the Pell equation $\ell^2-P^2k^2=1$, this yields the ellipsoid
\begin{equation}\label{eqn:ellipsoid}
f(r_1+d_1,\dots, r_{\rho-1}+d_{\rho-1})=
\left(
\begin{array}{c}
	r_1 \\
	\vdots \\
	r_{\rho-1}
\end{array}
\right)^T
Q
\left(
\begin{array}{c}
	r_1 \\
	\vdots \\
	r_{\rho-1}
\end{array}
\right)
-\frac{1}{p_0^2k^2+1}=0\,,
\end{equation}
where the matrix $Q$ is a positive definite matrix given by
\be
Q:=(q_{i,j})=
\begin{bycases}
p_i^2\frac{k^2}{\ell^2}+1		& \mbox{ if } i=j \\
p_ip_j\frac{k^2}{\ell^2}	& \mbox{ if } i\neq j\,.
\end{bycases}
\ee
and satisfies
\be 
\det(Q)=\frac{1}{\ell^2} \left(\ell^2+\sum_{i=1}^{\rho-1}p_i^2k^2\right)\,\stackrel{\rm Pell}{=}\,\frac{p_0^2k^2+1}{\ell^2}\,.
\ee
As $Q$ is a symmetric positive definite matrix, there exists a unique positive definite matrix $\sqrt{Q}$ such that $\sqrt{Q}^2=Q$.
Thus, equation \eqnref{eqn:ellipsoid} is equivalent to
\be 
1=\left(
\begin{array}{c}
	r_1 \\
	\vdots \\
	r_{\rho-1} 
\end{array}
\right)^T
\left(\sqrt{p_0^2k^2+1}\cdot \sqrt{Q}\right)^2
\left(
\begin{array}{c}
	r_1 \\
	\vdots \\
	r_{\rho-1} 
\end{array}
\right)=
\left\Vert
\left(\sqrt{p_0^2k^2+1}\cdot \sqrt{Q}\right)
\left(
\begin{array}{c}
	r_1 \\
	\vdots \\
	r_{\rho-1} 
\end{array}
\right)
\right\Vert^2\,.
\ee
This shows that the linear function $\sqrt{p_0^2k^2+1}\cdot \sqrt{Q}$ maps the translated ellipsoid onto the unit sphere in $\R^{\rho-1}$.
The volume formula for linear maps yields
\be 
\Vol(S^{\rho-2})=\det\left(\sqrt{p_0^2k^2+1}\cdot \sqrt{Q}\right)\cdot \Vol({\SD}(\pi_P))\,,
\ee
and, therefore, we obtain
\be 
\Vol({\SD}(\pi_P))=\frac{\Vol(S^{\rho-2})\cdot \ell}{\left(\sqrt{p_0^2k^2+1}\right)^\rho}
\stackrel{\rm Pell}{=}
\frac{\Vol(S^{\rho-2})\cdot \sqrt{P^2k^2+1}}{\left(\sqrt{p_0^2k^2+1}\right)^\rho}\,,
\ee
and by using the fact that $P^2=p_0^2-p_1^2-p_2^2-p_3^2<p_0^2$, we get
\be
\Vol({\SD}(\pi_P))\leq \frac{\Vol(S^{\rho-2})}{\left(\sqrt{p_0^2k^2+1}\right)^{\rho-1}}
<\frac{\Vol(S^{\rho-2})}{(p_0k)^{\rho-1}}\leq \frac{\Vol(S^{\rho-2})}{p_0^{\rho-1}}\,.
\ee
\end{proof}

This result yields the following:
\begin{corollary}\label{cor:volume-finite-pell-bounds}
For any given positive number $\zeta>0$ there exist only finitely many Pell bounds $\pi_P$ such that $\Vol({\SD}(\pi_P))\geq \zeta$.
\end{corollary}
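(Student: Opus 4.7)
\bigskip

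\noindent\textbf{Proof proposal.}
The plan is a short compactness/discreteness argument based entirely on Proposition~\ref{prop:volume} and the fact that $\NS(A)$ is a lattice in $\NS_\R(A)$. The idea is: the volume estimate forces the leading coefficient $p_0$ of $P$ to be bounded in terms of $\zeta$; ampleness then forces all remaining coefficients to be bounded as well, confining $P$ to a bounded region, which by discreteness of the N\'eron--Severi lattice contains only finitely many candidates.

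First, I would apply Proposition~\ref{prop:volume} directly: the hypothesis $\Vol(\SD(\pi_P))\ge \zeta$ combined with
\be
\Vol(\SD(\pi_P))<\frac{\Vol(S^{\rho-2})}{p_0^{\rho-1}}
\ee
yields $p_0^{\rho-1}<\Vol(S^{\rho-2})/\zeta$, so $p_0$ is bounded by an explicit constant $C(\zeta,\rho)$. Next, since $P=\sum_{i=0}^{\rho-1} p_iB_i$ is ample and the intersection form is $\mathrm{diag}(1,-1,\dots,-1)$ in the basis $B_0,\dots,B_{\rho-1}$, we have $P^2=p_0^2-\sum_{i\ge 1}p_i^2>0$, hence $|p_i|<p_0<C(\zeta,\rho)$ for every $i\ge 1$. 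Thus every such $P$ lies in a bounded box in $\NS_\R(A)$ whose size depends only on $\zeta$ and $\rho$.

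Finally, I would invoke discreteness of the lattice as in Remark~\ref{rem:proper-lb-basechange}: applying the base-change matrix $T$ from the $\R$-basis $B_0,\dots,B_{\rho-1}$ to the $\Z$-basis $D_0,\dots,D_{\rho-1}$ of $\NS(A)$, the image of the bounded box above is again bounded, and the coordinates of $P=T^{-1}(P)$ in the $D$-basis are integers. A bounded set in $\R^\rho$ contains only finitely many points of $\Z^\rho$, so there are only finitely many classes $P\in\NS(A)$ in that region, and in particular only finitely many primitive ample symmetric candidates. By Lemma~\ref{lem:different-pell-bound} distinct such $P$ give distinct Pell bounds $\pi_P$, so the set of Pell bounds with $\Vol(\SD(\pi_P))\ge\zeta$ is finite.

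I do not foresee a real obstacle here; the only point that requires a moment of care is the passage from the $\R$-basis $\{B_i\}$ (in which the volume estimate is clean) back to the $\Z$-basis $\{D_i\}$ (in which the lattice condition is clean), and this is exactly handled by the base-change matrix $T$ already used in Remark~\ref{rem:proper-lb-basechange}. One could alternatively note abstractly that any bounded subset of a finite-dimensional real vector space meets a lattice in finitely many points, bypassing the explicit matrix $T$ entirely.
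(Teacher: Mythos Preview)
Your proof is correct and follows essentially the same route as the paper: bound $p_0$ via Proposition~\ref{prop:volume}, use $P^2>0$ to bound the remaining $p_i$, confine $P$ to a bounded box, and conclude by discreteness of the lattice after the base change of Remark~\ref{rem:proper-lb-basechange}. The only difference is cosmetic: you make the step $|p_i|<p_0$ from ampleness explicit, and you append the (harmless but unnecessary) reference to Lemma~\ref{lem:different-pell-bound}---finitely many $P$ already yields finitely many $\pi_P$ without needing injectivity.
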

\begin{proof}
It follows from the previous Proposition that $P=\sum_{i=0}^{\rho-1} p_i B_i$ must satisfy
\be 
p_0\leq \sqrt[\rho-1]{\frac{\Vol(S^{\rho-2})}{\zeta}}\,.
\ee
Therefore, $P$ is contained in the compact set
\be 
W_\zeta:=\left[0,\sqrt[\rho-1]{\frac{\Vol(S^{\rho-2})}{\zeta}}\right]
\times 
\left[
-\sqrt[\rho-1]{\frac{\Vol(S^{\rho-2})}{\zeta}},
\sqrt[\rho-1]{\frac{\Vol(S^{\rho-2})}{\zeta}}
\right]^{\rho-2}\,.
\ee
Since the Néron-Severi group is a lattice in $\NS_\R(A)$, there only exist finitely many classes in $W_\zeta\cap \NS(A)$.

Using the same method as in Remark~\ref{rem:proper-lb-basechange} we can revert the base change and find a cube $[-m_{T,\zeta},m_{T,\zeta}]^{\rho}$ such that $P\in\NS(A)$ is contained in that cube.
\end{proof}

Recall that we constructed in Prop.~\ref{prop:seshadri-curve-submaximal-area} an open subcone on which every submaximal ample Seshadri curve is submaximal whenever we find an upper bound $R$ for the Seshadri constant with $\varepsilon(L)\leq R< \sqrt{L^2}$.
This subcone yields an open set in $\mathcal N(A)$ with positive volume $\zeta$ and its volume can be computed, for example, by using formulas for hyper-pyramids. 
By the previous results, there are only finitely many Pell bounds whose submaximality domain has volume $\zeta$.

Thus, it is left to show that we can find an upper bound $R$ for the Seshadri constant with $\varepsilon(L)\leq R< \sqrt{L^2}$.
If $\sqrt{L^2}\notin \Q$ then the Pell bound $\pi_L$ provides us with such an upper bound.
If $\sqrt{L^2}\in\Q$, then the proof of \cite[Thm.~3.14]{Bauer-Schmidt} shows that for an ample primitive symmetric line bundle $L$ we either have
\be 
\varepsilon(L)=\sqrt{L^2}\qquad \mbox{ or } \qquad \varepsilon(L)\leq \frac{2L^2-1}{2\sqrt{L^2}}<\sqrt{L^2}\,. 
\ee

Thus we have the necessary upper bound, and the construction given in Prop.~\ref{prop:seshadri-curve-submaximal-area} can be applied.
(Note that due to the construction of the upper bound $R$ via dimension-counting arguments of linear systems, this method is only applicable for rational ample line bundles.)

This will yield the second part of Thm.~\ref{introthm:seshadri-algorithm}, and completes the proof.

\begin{theorem}\label{thm:seshadri-ample}
Let $L$ be an ample line bundle and suppose that $L$ has an ample Seshadri curve which is submaximal for $L$.
Then there exists an effective algorithm to compute $\varepsilon(L)$ and the numerical data of all ample Seshadri curves of $L$.
\end{theorem}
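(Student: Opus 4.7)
The plan is to combine the geometric setup of Proposition~\ref{prop:seshadri-curve-submaximal-area} with the finiteness result of Corollary~\ref{cor:volume-finite-pell-bounds} and the Pell bound criterion of Theorem~\ref{thm:pell-bound-submaximal-curve-1-to-1}. The idea is straightforward: every submaximal ample Seshadri curve of $L$ is, by Proposition~\ref{prop:old-result}, the Pell divisor of some primitive ample symmetric class, so it suffices to enumerate a finite list of candidate Pell bounds, test each one, and select the smallest value at $L$.

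First, I would produce an effective upper bound $R$ satisfying $\varepsilon(L) \le R < \sqrt{L^2}$. If $\sqrt{L^2}\notin\Q$, the Pell bound $\pi_L$ itself gives such an $R$ by Corollary~\ref{cor:epsamp-and-pellbounds}. If $\sqrt{L^2}\in\Q$, I invoke the dichotomy cited from \cite[Thm.~3.14]{Bauer-Schmidt}: either $\varepsilon(L)=\sqrt{L^2}$ (which is excluded by the hypothesis that $L$ has a submaximal ample Seshadri curve) or $\varepsilon(L) \le (2L^2-1)/(2\sqrt{L^2})<\sqrt{L^2}$, and the latter quantity serves as $R$. With $R$ in hand I apply Proposition~\ref{prop:seshadri-curve-submaximal-area} to obtain an explicit open convex subcone $U(L,R) \subset \Amp(A)$ on which every Seshadri curve of $L$ is submaximal.

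Next, I intersect $U(L,R)$ with the standardized cross-section $\mathcal N(A)$ to obtain an open set of positive volume $\zeta$; since $U(L,R)$ is defined by explicit linear data coming from the $2\rho(A)-2$ generating points constructed in Proposition~\ref{prop:seshadri-curve-submaximal-area}, this volume can be computed effectively (for instance as a sum of hyper-pyramid volumes). By Corollary~\ref{cor:volume-finite-pell-bounds}, the primitive ample symmetric classes $P$ whose Pell bound has submaximality domain of volume at least $\zeta$ lie in an explicit compact box, and -- after reverting the $\R$-valued base change as in Remark~\ref{rem:proper-lb-basechange} -- intersecting this box with the lattice $\NS(A)$ produces a finite, effectively enumerable set $\mathcal P$. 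Every submaximal Seshadri curve $C$ of $L$ must satisfy $U(L,R)\subset \SC(\varphi_C)=\SC(\pi_{\O_A(C)})$ by Proposition~\ref{prop:old-result}, so the class $\O_A(C)$ (up to rational multiple) appears in $\mathcal P$.

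Finally, I compute $\pi_P(L)$ for every $P\in\mathcal P$, also compute $\varepsilon_{\rm ell}(L)$ using Remark~\ref{rem:eps-elliptic}, and identify those $P$ for which $\pi_P(L)$ attains the minimum; by Theorem~\ref{thm:pell-bound-submaximal-curve-1-to-1}(iv)$\Leftrightarrow$(v) exactly these $P$ yield ample submaximal Seshadri curves, and the Pell equation $x^2 - P^2 y^2 = 1$ together with the representation of $P$ in the basis gives the full numerical data of the curve. The resulting value $\min_{P\in\mathcal P} \pi_P(L)$ is $\varepsilon_{\rm amp}(L)$; combined with $\varepsilon_{\rm ell}(L)$ this gives $\varepsilon(L)$. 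The main obstacle is the handling of the rational case $\sqrt{L^2}\in\Q$: producing $R < \sqrt{L^2}$ there requires the dimension-counting dichotomy from \cite{Bauer-Schmidt}, and it is only this step that forces the restriction to rational ample classes in the statement.
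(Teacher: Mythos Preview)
Your approach is essentially the paper's own: produce an upper bound $R<\sqrt{L^2}$ (via $\pi_L$ or the $\tfrac{2L^2-1}{2\sqrt{L^2}}$ dichotomy), feed it into Proposition~\ref{prop:seshadri-curve-submaximal-area} to obtain a region of positive volume $\zeta$, invoke Corollary~\ref{cor:volume-finite-pell-bounds} to reduce to a finite candidate list $\mathcal P$, and take the minimum of the corresponding Pell bounds at $L$. This part is correct and matches the paper line for line.

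There is, however, a slip in your final step. You write that the $P\in\mathcal P$ minimizing $\pi_P(L)$ are \emph{exactly} those yielding ample Seshadri curves, and you justify this by Theorem~\ref{thm:pell-bound-submaximal-curve-1-to-1}(iv)$\Leftrightarrow$(v). But condition~(iv) of that theorem is a statement about strict minimality of $\pi_P$ \emph{at the point $P$}, not at $L$: it requires $\pi_P(P)<\pi_M(P)$ for every other Pell bound $\pi_M$ and $\pi_P(P)<\varphi_E(P)$ for every elliptic curve. Nothing prevents a Pell bound $\pi_P$ from accidentally satisfying $\pi_P(L)=\varepsilon(L)$ while failing condition~(iv) at $P$ and hence not corresponding to any curve. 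Consequently, minimizing over $\mathcal P$ correctly gives $\varepsilon(L)$ (since the genuine Seshadri curves of $L$ do lie in $\mathcal P$ and attain the minimum), but to list precisely the ample Seshadri curves you must, for each minimizing $P$, verify condition~(iv) at $P$ itself. The paper does this by re-running the first part of the algorithm with $P$ in place of $L$, computing $\varepsilon(P)$, and checking that no other Pell bound or elliptic curve also computes $\varepsilon(P)$. With this correction your argument is complete and coincides with the paper's.
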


\begin{proof}[Algorithm]
For a given basis $D_0,\dots, D_{\rho-1}$ for the Néron-Severi group we first compute the base change $T$ to the basis $B_0,\dots, B_{\rho-1}$ given by the Hodge index theorem.
The next step is to compute an upper bound of the Seshadri constant of $L$.
If $\sqrt{L^2}\notin\Z$ then the Pell bound of $L$ yields the upper bound $R:=\pi_L(L)$ and in the case $\sqrt{L^2}\in\Z$ we have the upper bound $R:=\frac{2L^2-1}{2\sqrt{L^2}}$ for the Seshadri constant.
Using the upper bound $R$, we compute a lower bound $\zeta$ for the volume of the domain on which every Seshadri curve is submaximal (see Prop.~\ref{prop:seshadri-curve-submaximal-area}).
It follows from Cor.~\ref{cor:volume-finite-pell-bounds} and Remark~\ref{rem:proper-lb-basechange} that we only have to consider Pell bounds $\pi_P$ with $P\in [-m_{T,\zeta},m_{T,\zeta}]^{\rho}$.
Since every ample Seshadri curve is represented by a Pell bound, the minimum over those Pell bounds computes the Seshadri constant, i.e., we have 
\be 
\varepsilon(L)=\min\set{\pi_P(L)\with \pi_P \mbox{ Pell bound with } P\in [-m_{T,\zeta},m_{T,\zeta}]^{\rho}}\,.
\ee

For the computation of the Seshadri curves, we use Thm.~\ref{thm:pell-bound-submaximal-curve-1-to-1} as follows:
Let $P$ be an ample primitive symmetric line bundle, such that $\varepsilon(L)$ coincides with $\pi_P(L)$.
Then, we apply the first part of the algorithm to compute $\varepsilon(P)$ and check whether there are elliptic curves or other Pell bounds which also compute $\varepsilon(P)$.
If no such elliptic curve or Pell bound exists, then the Pell divisor of $P$ yields an ample Seshadri curve of $L$ by Thm.~\ref{thm:pell-bound-submaximal-curve-1-to-1}~(iv).
\end{proof}

\begin{remark}\label{rem:seshadri-takes-upper-bound}
In the case of an ample line bundle $L$ with $\sqrt{L^2}\in\Z$, it is a priori unknown whether the Seshadri constant is strictly smaller or equal to $\sqrt{L^2}$.
To determine this, we first check whether or not there exists a submaximal elliptic curve by applying the algorithm used for Thm.~\ref{thm:elliptic-sesh}.
If there does not exist a submaximal elliptic curve, then we apply the algorithm used for Thm.~\ref{thm:seshadri-ample} under the additional assumption that the Seshadri constant is strictly smaller than $\sqrt{L^2}$.
Thus, we may assume that the upper bound $R:=\frac{2L^2-1}{2\sqrt{L^2}}$ holds and we can apply the algorithm.
If the algorithm yields
\be 
\varepsilon^*:=\min\set{\pi_P(L)\with \pi_P \mbox{ Pell bound with } P\in [-m_{T,\zeta},m_{T,\zeta}]^{\rho}} \leq \sqrt{L^2}\,,
\ee
then the Seshadri constant of $L$ satisfies $\varepsilon(L)=\varepsilon^*<\sqrt{L^2}$ since there exists a submaximal Pell divisor and, hence, a curve whose Seshadri constant is smaller than $\sqrt{L^2}$.
However, if the algorithm shows that $\varepsilon^* \geq \sqrt{L^2}$, then no such Pell bound exists and, thus, the assumption $\varepsilon(L)<\sqrt{L^2}$ was incorrect.
\end{remark}

\section{Structure of the Seshadri function on abelian surfaces}
\label{sec:plot}
The developed methods provide a tool to compute not only Seshadri constants of individual line bundles but also determine the numerical data of their submaximal Seshadri curves.
This gives access to the local structure of Seshadri functions since they are \textit{locally} piecewise linear whenever $\varepsilon(L)<\sqrt{L^2}$ holds where the linear segments are given by its Seshadri curves (see Prop.~\ref{prop:local-structur-seshadri-function-n-curves}).
In this section, we will further discuss the local and global structure of the Seshadri function.

\paragraph{Local Structure.}
To understand and describe the structure of the Seshadri function we will consider the set of rays
\be 
Y:=\set{L \subset \Nef(A)\setminus \set{0} \,\with\, \varepsilon \mbox{ is not piecewise linear in any neighborhood of } L}\,.
\ee
where the Seshadri function does not locally behave like a piecewise linear function.
By Prop.~\ref{prop:local-structur-seshadri-function-n-curves} the Seshadri function is locally piecewise linear if $\varepsilon(L)<\sqrt{L^2}$ holds.
The following result shows that the case $\varepsilon(L)<\sqrt{L^2}$ holds quite frequently and implies that $Y$ is a subset of a closed and nowhere dense set.
\begin{lemma}\label{lem:local-nowhere-dense}
Let $A$ be an abelian surface with Picard number $\rho\geq 2$.
Then the set 
\be 
\set{L\in \Nef(A)\,\with\, \varepsilon(L)=\sqrt{L^2}}\subset \Nef(A)
\ee
is closed and nowhere dense.
\end{lemma}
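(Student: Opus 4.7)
The closedness part is immediate from continuity: $\varepsilon$ is continuous on $\Nef(A)$ (see the remark after Lemma~\ref{prop:infimum-continuous}), and so is $L\mapsto \sqrt{L^2}$, so the set in question is the zero locus of the nonnegative continuous function $L\mapsto \sqrt{L^2}-\varepsilon(L)$ and is therefore closed in $\Nef(A)$.

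For nowhere-denseness my plan is to argue by contradiction and invoke the Pell bound. Assume the interior of the set is nonempty. By homogeneity of both $\varepsilon$ and $L\mapsto \sqrt{L^2}$, the interior is cone-invariant, so there exists an open subcone $U\subset\Amp(A)$ on which $\varepsilon(L)=\sqrt{L^2}$ identically. The aim is to pick an ample primitive symmetric line bundle $L$ whose ray passes through $U$ and which satisfies $\sqrt{L^2}\notin\Z$. For such an $L$ the primitive solution $(\ell,k)$ of $x^2-L^2y^2=1$ satisfies $\ell^2=1+L^2k^2>L^2k^2$, so the existence of a Pell divisor gives
\be
\varepsilon(L)\,\le\,\pi_L(L)\,=\,\frac{kL^2}{\ell}\,<\,\frac{L^2}{\sqrt{L^2}}\,=\,\sqrt{L^2}\,,
\ee
contradicting the assumption $\varepsilon(L)=\sqrt{L^2}$ (which, by homogeneity, is preserved along the whole ray through $L$).

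The construction of such an $L$ is the main obstacle, and this is precisely where the hypothesis $\rho(A)\ge 2$ is used. In the open cone $U$, which is of dimension $\rho\ge 2$, one can pick two $\R$-linearly independent rational ample classes $M_0,M_1\in U$ and consider $M_t:=M_0+tM_1$ for small rational $t>0$; these classes remain in $U$, and $M_t^2$ is a nonconstant quadratic polynomial in $t$ with rational coefficients, so only countably many $t\in\Q$ can make $M_t^2$ a rational square. Choosing an admissible $t$ with $\sqrt{M_t^2}\notin\Q$, clearing denominators, and dividing by the gcd of the coefficients produces a primitive integral ample class on the same ray as $M_t$. Choosing a symmetric line bundle representing that class -- which exists since every class in $\NS(A)$ of an abelian variety has a symmetric representative -- gives the required $L$: $L^2$ is a positive integer that is not a perfect square, so $\sqrt{L^2}\notin\Z$, and the Pell-bound contradiction above applies.
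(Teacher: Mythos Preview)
Your overall strategy is correct and essentially the same as the paper's: reduce to finding an integral ample class $L$ in the open cone with $\sqrt{L^2}\notin\Z$, and then invoke the Pell bound $\pi_L(L)<\sqrt{L^2}$ for the contradiction. The closedness argument is fine.

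There is, however, a genuine gap in your construction of such an $L$. You write that ``only countably many $t\in\Q$ can make $M_t^2$ a rational square,'' and then pick an admissible $t\in\Q$ outside that set. But $\Q$ itself is countable, so this cardinality statement is vacuous and does not guarantee that \emph{any} rational $t$ avoids the bad set. (If you meant $t\in\R$, the countability is correct but then $M_t$ is no longer a rational class.) Relatedly, ``nonconstant quadratic'' is not enough: you need that $M_t^2$ is not a perfect square \emph{as a polynomial}, since e.g.\ $(t+1)^2$ is a nonconstant quadratic that is always a square. What actually saves you is the Hodge index theorem: since $M_0,M_1$ are ample and $\R$-linearly independent, the discriminant $4\bigl((M_0\cdot M_1)^2-M_0^2M_1^2\bigr)$ is strictly positive, so $M_t^2$ is not the square of a linear polynomial. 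One then needs the classical fact that an integer-coefficient polynomial taking perfect-square values at all nonnegative integers is the square of a polynomial; this is exactly what the paper invokes (via \cite{Murty:quadpoly}), applied to the full intersection form $F(X_1,\dots,X_\rho)=X^TSX$ rather than to a one-parameter slice. With that input your argument goes through, but as written the key existence step is not justified.
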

\begin{proof}
Let $W$ be the set in the statement.
Clearly, $W$ is closed since $\varepsilon$ and the upper bound $L\mapsto \sqrt{L^2}$ are continuous functions.
Now, assume to the contrary that there exists an open subset $U\subset \Nef(A)$ such that $W$ is dense in $U$.
We will derive a contradiction by constructing an open subset $V\subset U$ with $\varepsilon(L)<\sqrt{L^2}$ for all $L\in V$.
For this, we consider the open subcone $\mathcal C \subset \Nef(A)$ generated by $U$ and chose a basis $B_1,\dots, B_\rho$ of the Néron-Severi group with elements in $\mathcal C$.

We claim that there exists a line bundle $L\in \mathcal C$ with $\sqrt{L^2}\notin\Z$.
Let $S$ be the associated intersection matrix, then the existence of a line bundle $L$ with $\sqrt{L^2}\notin\Z$ is equivalent to the existence of a vector $(x_1,\dots,x_\rho)\in\N^{\rho}_{0}$ such that $x^TSx$ is not a perfect square.
We consider the polynomial function
\be 
F(X_1,\dots, X_\rho)=(X_1,\dots, X_\rho)S(X_1,\dots, X_\rho)^T\in\Z[X_1,\dots, X_\rho]\,.
\ee
Our assumption imply that $F(x)$ is a perfect square for every $x\in \N_0^\rho$ and, therefore, it follows from \cite[Thm.~2]{Murty:quadpoly} that there exists a polynomial function $G\in\Z[X_1,\dots,X_\rho]$ such that $F(X)=G^2(X)$.
This, however, implies that the Determinant of $S$ vanishes which is a contradiction to the Hodge index theorem.
Thus, we find a line bundle $L$ with $L\in \mathcal C$ with $\sqrt{L^2}\notin\Z$.

The submaximality cone $\SC(\pi_L)\subset \Nef(A)$ of the Pell bound $\pi_L$ yields an open subcone with $\varepsilon(M)<\sqrt{M^2}$ for all $M\in \SC(\pi_L)$.
Then, $V:=\SC(\pi_L)\cap U$ is a non-empty open subset of $U$ with $W\cap V=\emptyset$, which completes the proof.
\end{proof}

\paragraph{Global Structure.} 
For the global structure, we have to consider how the linear segments of the Seshadri function are put together globally.
One approach is to further study the structure of the set $Y$, where the Seshadri function is not piecewise linear.
We will first study the case where $Y$ is empty, i.e., where the Seshadri function is globally piecewise linear.
Contrary to what one might expect, the following result shows that Seshadri functions are not necessarily globally piecewise linear functions and, even more, piecewise linear Seshadri functions seem to be the exception.

\begin{proposition}\label{prop:piecewise-linear}
Let $A$ be an abelian surface such that the Seshadri function is piecewise linear.
Then $A$ is either an abelian surface with Picard number one or a non-simple abelian surface with Picard number two.
\end{proposition}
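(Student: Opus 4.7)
The plan is to exploit the vanishing $\varepsilon\equiv 0$ on the boundary of the nef cone together with the signature $(1,\rho-1)$ of the intersection form. I would work in the standardized cross-section $\mathcal{N}(A)$ of Sect.~\ref{sec:computing-seshc}, which is the closed unit ball in $\R^{\rho-1}$; the portion of the boundary of $\Nef(A)$ inside this slice is the unit sphere $S^{\rho-2}=\set{L\in\mathcal{N}(A):L^2=0}$, on which the general upper bound $\sqrt{L^2}$, and hence $\varepsilon$, vanishes. If $\varepsilon$ were piecewise linear, $\mathcal{N}(A)$ would decompose into finitely many closed polytopes $P_1,\dots,P_n$ on each of which $\varepsilon$ is the restriction of an affine function. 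Since $\sqrt{L^2}$ is not affine on any open subset of $\mathcal{N}(A)$, Lemma~\ref{lem:local-nowhere-dense} together with Prop.~\ref{prop:local-structur-seshadri-function-n-curves} forces the affine function on each $P_i$ to coincide with $\varphi_{C_i}$ for some elliptic or submaximal ample Seshadri curve $C_i$.

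The key step is a Hodge-index argument. For any $L_0\in P_i\cap S^{\rho-2}$ we have $L_0^2=0$ and $\varphi_{C_i}(L_0)=\varepsilon(L_0)=0$, so $L_0\cdot C_i=0$. Since the intersection form has signature $(1,\rho-1)$, its restriction to $L_0^\perp$ is negative semidefinite with kernel exactly $\R L_0$. As any curve on an abelian surface satisfies $C_i^2\geq 0$, the inclusion $C_i\in L_0^\perp$ forces $C_i\in \R L_0$ and $C_i^2=0$; by Prop.~\ref{prop:crit-elli-2}, $C_i$ is then an elliptic curve whose class is proportional to $L_0$ in $\NS_\R(A)$. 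Consequently, if $P_i\cap S^{\rho-2}$ contained two distinct points $L_0,L_0'$, both would be proportional to $C_i$, hence to one another; but every element of $\mathcal{N}(A)$ has $B_0$-coefficient equal to $1$, so proportionality implies equality. Thus each $P_i$ meets the sphere in at most one point.

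Since the $P_i$ cover $\mathcal{N}(A)$, they cover $S^{\rho-2}$ by finitely many points. For $\rho\geq 3$ the sphere $S^{\rho-2}$ is uncountable, giving an immediate contradiction, so the Seshadri function cannot be piecewise linear in Picard number $\geq 3$. For $\rho=2$ the ``sphere'' consists of exactly the two points $L_{\pm 1}$, and the argument forces each of them to be proportional in $\NS_\R(A)$ to an integral primitive class of square zero, i.e., by Prop.~\ref{prop:crit-elli-2} to an elliptic curve class; this is possible only when $A$ is non-simple. The case $\rho=1$ is trivial. The main obstacle is the Hodge-index step forcing $C_i$ to be proportional to $L_0$; the rest of the proof is bookkeeping around the piecewise linear decomposition.
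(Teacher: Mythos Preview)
Your argument is correct and gives a unified treatment that the paper does not. The paper instead splits into two cases. For non-simple $A$ with $\rho\in\{3,4\}$ it uses that there are infinitely many elliptic curves through $0$ and invokes Prop.~\ref{prop:elliptic-seshadri-curve-on-cone} to conclude that each contributes its own linear segment, so the function has infinitely many pieces. For simple $A$ with $\rho\in\{2,3\}$ it observes that every Seshadri curve $C_i$ is ample, hence $L'\cdot C_i>0$ for every nonzero $L'\in\partial\Nef(A)$, contradicting $\varepsilon(L')\le\sqrt{L'^2}=0$. Your Hodge-index step subsumes both cases at once: from $L_0\cdot C_i=0$, $L_0^2=0$ and $C_i^2\ge 0$ you deduce that $C_i$ is elliptic and numerically proportional to $L_0$, so each $P_i$ meets $S^{\rho-2}$ in at most one point; cardinality then excludes $\rho\ge 3$, and for $\rho=2$ the two boundary points force the existence of an elliptic curve and hence non-simplicity. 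The paper's route is more elementary in the simple case (only the positivity of ample against nonzero nef is used) but relies on the nontrivial Prop.~\ref{prop:elliptic-seshadri-curve-on-cone} in the non-simple high-Picard case, which your approach bypasses entirely.
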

\begin{proof}
Assume that $A$ is a non-simple abelian surface with Picard number three or four.
Then $A$ has infinitely many elliptic curves passing through 0 and, thus, Prop.~\ref{prop:elliptic-seshadri-curve-on-cone} shows that $A$ has infinitely many submaximal Seshadri curves.
Therefore, the Seshadri function consists of infinitely many linear segments and, in particular, is not piecewise linear.

Now suppose that $A$ is a simple abelian surface with Picard number two or three.
We assume to the contrary that $A$ has a piecewise linear Seshadri function and, thus, the Seshadri function can be written as the minimum of finitely many curves, i.e.,
\be 
\varepsilon:\Nef(A)\to \R_{\geq 0}\,,\qquad L\mapsto \min\set{\frac{L\cdot C_i}{\mult_0 C_i}\,\with\, i=1,\dots,n}
\ee
where $C_1,\dots, C_n$ are finitely many ample curves.
Since the intersection of an ample line bundle with a non-trivial nef class is positive on an abelian surface (see Sect.~\ref{sec:sesh-elliptic-case}), it follows that $\varepsilon(L')>0$ for every $L'\in\partial\Nef(A)\setminus\set{0}$.
However, since $L'^2=0$ we have $\varepsilon(L')\leq \sqrt{L'^2}=0$ which is a contradiction.
\end{proof}
On non-simple abelian surfaces with Picard number two, the algorithm of Thm.~\ref{introthm:seshadri-algorithm} shows that the Seshadri function is piecewise linear in the following cases:
\begin{example}\label{ex:piecewise-linear}
Let $A$ be a non-simple abelian surface with Picard number two such that there exists a basis of the Néron-Severi group with one of the following intersection matrices
\be 
\left(
\begin{array}{cc}
	2 & 2 \\
	2 & 0 
	\end{array}
\right)\,,\qquad
\left(
\begin{array}{cc}
	2 & 3 \\
	3 & 0 
	\end{array}
\right)\,,\qquad 
\left(
\begin{array}{cc}
	0 & n \\
	n & 0 
	\end{array}
\right)\quad
\mbox{ for } n\in\set{1,\dots, 6}\,.
\ee
Then the Seshadri function of $A$ is piecewise linear.
The graph in Fig.~\ref{fig:seshadri-funktion-0-4-0} illustrates a piecewise linear Seshadri function with three linear segments.
\end{example}
It is unknown whether there are other cases where the Seshadri function is piecewise linear.
Computer-assisted computations even suggest that piecewise linear functions only appear in the cases above.
\begin{conjecture}\label{conj:finitely-piecewise}
Let $A$ be a non-simple abelian surface with Picard number two.
Then the Seshadri function of $A$ is piecewise linear precisely in the cases of Example~\ref{ex:piecewise-linear}.
\end{conjecture}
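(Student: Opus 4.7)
My plan is to focus on the nontrivial direction, namely that for every non-simple abelian surface $A$ with Picard number two whose intersection matrix is \emph{not} in the list of Example~\ref{ex:piecewise-linear}, the Seshadri function on $A$ is not piecewise linear. The forward direction (that each listed matrix does yield a piecewise linear Seshadri function) is verified case by case by running the algorithm of Thm.~\ref{introthm:seshadri-algorithm} on the finite list and so requires no new idea.

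First I would catalogue the possible linear segments of $\varepsilon$. Since $A$ is non-simple of Picard number two, Prop.~\ref{prop:crit-elli-2} yields at most two primitive lattice classes on the boundary of $\Nef(A)$, contributing at most two elliptic linear pieces $\varphi_{E_i}$ via Prop.~\ref{prop:elliptic-seshadri-curve-on-cone}. All other linear segments come from submaximal ample Seshadri curves, and by Cor.~\ref{cor:submax-and-pell} together with Thm.~\ref{thm:pell-bound-submaximal-curve-1-to-1} these are in bijection with Pell bounds $\pi_L$ that realize the pointwise minimum among all Pell bounds and elliptic linear functions at their own defining class $L$. Hence to rule out piecewise linearity it suffices to produce infinitely many distinct \emph{contributing} Pell bounds.

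The engine for doing so is the isometry group $G \subset \Aut(\NS(A))$ from Cor.~\ref{introcor:isometries}, which leaves the Seshadri function invariant. For an indefinite binary integral symmetric form of non-square discriminant, the orientation-preserving isometries form an infinite cyclic group generated by a fundamental Pell unit of the associated quadratic order. I would then proceed in three steps: (i) classify the intersection matrices for which $G$ is finite; by the classical theory of binary quadratic forms these are exactly the square-discriminant cases together with a short list of small-entry exceptions, and I expect this list to coincide with Example~\ref{ex:piecewise-linear}; (ii) for every remaining matrix, invoke $G$ to propagate a single contributing Pell bound into an infinite $G$-orbit of contributing Pell bounds, forcing infinitely many linear pieces in $\varepsilon$; (iii) exhibit at least one nontrivial contributing Pell bound in each such case.

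Step (iii) is where I expect the main obstacle. Producing an ample primitive symmetric $L$ with $\sqrt{L^2}\notin\Z$ that satisfies the minimum criterion Thm.~\ref{thm:pell-bound-submaximal-curve-1-to-1}(iv) requires simultaneously beating all other Pell bounds $\pi_M(L)$ and keeping each $\varphi_{E_i}(L)$ large. Although Cor.~\ref{cor:volume-finite-pell-bounds} limits, for any volume threshold, the Pell bounds that can matter near $L$, a sharp comparison between $\pi_L(L)$ and this finite competing set demands effective estimates on the growth of Pell fundamental solutions as a function of the discriminant $L^2$ and on the distance from $L$ to the elliptic rays. It is precisely this delicate numerical interplay between the arithmetic of Pell equations, the geometry of the nef cone, and the orbit structure under $G$, compounded with a clean direct treatment of the finite exceptional list in step (i), that currently keeps the statement a conjecture rather than a theorem.
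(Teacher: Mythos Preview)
The statement is a \emph{conjecture}; the paper offers no proof, only computational evidence. So there is no paper-side argument to match. What matters is whether your strategy could in principle succeed, and here there is a fatal gap.

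Your entire plan rests on step~(ii): once $G$ is infinite, propagate one contributing Pell bound along a $G$-orbit to obtain infinitely many linear segments. But on a \emph{non-simple} abelian surface with Picard number two the group $G$ is \emph{always} finite. The paper states this explicitly in the Remark following Example~\ref{ex:non-piecewise-linear}: by \cite[Sect.~2.5]{Buchmann-Vollmer:BQF} the subgroup $G$ is trivial or isomorphic to $\Z_2$ in the non-simple case. The reason is that the two elliptic boundary rays of $\Nef(A)$ are rational, so the binary form has square discriminant, and such forms have finite automorphism groups. Consequently your step~(i) cannot hold as stated --- the matrices with finite $G$ are \emph{all} non-simple intersection matrices, not just those in Example~\ref{ex:piecewise-linear} --- and step~(ii) is vacuous throughout the entire range of the conjecture. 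Your argument, as written, applies only to the simple case, which is already covered by Prop.~\ref{prop:piecewise-linear}.

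The one non-trivial instance the paper \emph{does} settle, Example~\ref{ex:non-piecewise-linear} with intersection matrix $\left(\begin{smallmatrix}0&8\\8&0\end{smallmatrix}\right)$, uses a completely different mechanism: one locates an ample $L$ with $\varepsilon(L)=\sqrt{L^2}$ and then shows by a direct Pell-equation congruence that no Pell bound $\pi_P$ and neither elliptic curve can satisfy $\pi_P(L)=\sqrt{L^2}$, forcing infinitely many segments to accumulate at $L$. A proof of the conjecture would need to produce such an $L$ for every intersection matrix outside the list, and that arithmetic obstruction is exactly what remains open.
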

The conjecture suggests that most Seshadri functions on non-simple abelian surfaces with Picard number two are not piecewise linear.
The following example exhibits such a case.
\begin{example}\label{ex:non-piecewise-linear}
Let $A$ be a non-simple abelian surface with Picard number two such that the Néron-Severi group is generated by the classes of two elliptic curves $N_1, N_2$ with $N_1\cdot N_2=8$.
We claim that the Seshadri function is not piecewise linear.
To this end, we consider the line bundle $L:=\O_A(N_1+N_2)$ and the algorithm from Thm.~\ref{introthm:seshadri-algorithm} shows that $\varepsilon(L)=\sqrt{L^2}=4$.

We show that no linear segment $f$ of the Seshadri function satisfies $f(L)=\sqrt{L^2}=4$ and, thus, there are infinitely many linear segments accumulating near $L$.
We start with the linear segments generated by elliptic curves.
Since we have $L\cdot N_1=L\cdot N_2=8$, none of the elliptic curves compute the Seshadri constant of $L$.

Next, we consider the linear segments generated by submaximal ample curves.
We claim that no Pell bound $\pi_P$ and, hence, no linear segment from an ample curve satisfies $\pi_P(L)=\sqrt{L^2}$.
Let $P\equiv aN_1+bN_2$ with $a,b\in\N$ be an ample primitive line bundle with $\sqrt{P^2}=4\sqrt{ab}\notin\Z$, and denote by $(\ell,k)$ the primitive solution of the Pell equation $x^2-P^2y^2=1$.
Assume that the linear function $\pi_P$ satisfies $\pi_P(L)=\sqrt{L^2}=4$ which is equivalent to $2k(a+b)=\ell$.
By squaring both sides and using the Pell identity $\ell^2=1+16k^2ab$, we obtain
\be 
4k^2(a+b)^2=1+16k^2ab\,.
\ee
However, this yields $4k^2(a-b)^2=1$ which is impossible.
Therefore, we have $\pi_P(L)\neq \sqrt{L^2}$ for every Pell bound $\pi_P$.

This shows that in every open neighborhood of $L$ the Seshadri function consists of infinitely many linear segments.
Furthermore, one can show that the submaximality domains of the elliptic curves $N_1$, $N_2$ and the Pell bounds of $n^2N_1+(n+1)^2N_2$ and $(n+1)^2N_1+n^2N_2$ for $n\in\N$ cover $\Nef(A)\setminus\R_{>0} L$ and, thus, the Seshadri function is locally piecewise linear for all $M\in \Nef(A)$ with $M\notin \R_{>0} L=Y$.
This yields a function that is \engqq{almost} piecewise linear, which means that it is piecewise linear on every compact subset $K\subset \Nef(A)\setminus \R_{>0}L$.
The graph in Fig.~\ref{fig:seshadri-funktion-0-8-0} illustrates the Seshadri function of $A$ in the basis $B_0,B_1$ (see Sect.~\ref{sec:computing-seshc}).
\end{example}

\begin{remark}
A different perspective on the question of whether the Seshadri function is piecewise linear can be obtained by using the results of Cor.~\ref{introcor:isometries} and Cor.~\ref{cor:seshadricurves-isometry}.
These imply that the Seshadri function can never be piecewise linear whenever the subgroup $G\subset \Aut(\NS(A))$ of isometries that leaves the Seshadri function invariant is infinite.

For example, on \textit{simple} abelian surfaces with Picard two, the subgroup $G$ is isomorphic to either $\Z$ or $\Z\times \Z_2$.
This follows from the fact that the automorphism groups of binary quadratic forms are well understood (see \cite[Sect.~2.5]{Buchmann-Vollmer:BQF}).
Thus, the decomposition of the ample cone from Cor.~\ref{introcor:isometries} yields infinitely many subcones and, therefore, the Seshadri function is never piecewise linear since it has infinitely many linear segments.
On \textit{non-simple} abelian surfaces, on the other hand, \cite[Sect.~2.5]{Buchmann-Vollmer:BQF} shows that the subgroup $G$ is trivial or isomorphic to $\Z_2$.
Thus, this approach does not answer whether or not the Seshadri function on a non-simple abelian surface with Picard number two is piecewise linear and both cases do occur as we have seen in Example~\ref{ex:piecewise-linear} and Example~\ref{ex:non-piecewise-linear}.
\end{remark}

Naturally, one can ask what other \engqq{simple} structures are possible for the Seshadri function, apart from Example~\ref{ex:non-piecewise-linear}, where the set $Y$ consists of exactly one ray $\R_{>0}L$.
Another simple structure would emerge if it were to happen that the Seshadri function restricted to any subcone given by the decomposition from Cor.~\ref{introcor:isometries} is linear or at least piecewise linear.
By \cite{Bauer-Schmidt} this cannot occur on principally polarized abelian surfaces with real multiplication since on these surfaces the Seshadri function has infinitely many linear segments on each given subcone by Cor.~\ref{introcor:isometries}.
However, the following example shows that it does occur in Picard number three on self-products $E\times E$ of elliptic curves $E$ without complex multiplication:
\begin{example}\label{ex:product-elliptic}
Let $E$ be an elliptic curve without complex multiplication and consider the self product $A=E\times E$.
(For more details on the computations we refer to \cite{Bauer-Schulz} or \cite{Schmidt:integrality}.)
The Néron-Severi group is generated by the fibers $F_1=\{0\}\times E_2$, $F_2=E_1 \times \{0\}$ and the diagonal $\Delta$ form a basis of the Néron-Severi group $\NS(A)$ with the intersection matrix
\be
	\left(
	\begin{array}{ccc}
		0 & 1 & 1  \\
		1 & 0 & 1  \\
		1 & 1 & 0 
	\end{array}
	\right)
	\,
\ee
and it is known that every Seshadri constant is computed by an elliptic curve.
For an elliptic curve $N$ we denote by $\mathcal C(N)\subset \Nef(A)$ the closed subcone in which $N$ computes the Seshadri constants.

We claim that for every elliptic curve $N_1$ we find two more elliptic curves $N_2$ and $N_3$ such that $N_1,N_2, N_3$ form a basis of the Néron-Severi group with the same intersection matrix.
Thus, there exists an automorphism $\varphi:\NS(A)\to\NS(A)$ mapping $N_1,N_2, N_3$ to $F_1, F_2,\Delta$ and, therefore, $\varphi$ maps the subcone $\mathcal C(N_1)$ onto $\mathcal C(F_1)$.
This results in a decomposition of the ample cone into infinitely many subcones in the sense of Cor.~\ref{introcor:isometries} where the Seshadri function restricted to any such subcone is linear.

To show the existence of the elliptic curves $N_2$ and $N_3$ we use the representation from \cite[Lem.~2.4]{Schmidt:integrality}:
For any elliptic curve $N$ there exist coprime integers $\alpha,\beta\in\Z$, such that
\be 
N \equiv N_{\alpha,\beta} := (\beta^2-\alpha \beta)F_1 + (\alpha^2-\alpha \beta)F_2 + \alpha \beta\Delta\,
\ee
and, conversely, any coprime pair $\alpha,\beta\in\Z$ defines an elliptic curve this way.
Let $a,b\in\Z$ be a coprime pair such that $N_1\equiv N_{a,b}$.
Since $a,b$ are coprime, we find two coprime integers $c,d\in\Z$ with $ad-bc= 1$.
Then, the elliptic curves given by $N_2:=N_{c,d}$ and $N_3:=N_{a+c,b+d}$ form a basis as claimed.
\end{example}

\subsection{Structures and example plots in Picard number two}
\label{sec:structure-pic-2}
The previous section shows that there are abelian surfaces with Picard number two where the Seshadri function is piecewise linear, and by \cite{Bauer-Schmidt} there are cases where the Seshadri function is so-called broken linear.
But Example~\ref{ex:non-piecewise-linear} shows that there are Seshadri functions that are neither piecewise linear nor broken linear and, thus, there occur other structures for the Seshadri function.
Based on computational evidence, we will shed light on what other structures occur aside from piecewise linear and broken linear functions.
It turns out that we observe two kind of structures for the Seshadri function on abelian surfaces with Picard number two.
To give a precise definition, we will make use of the set
\be 
Z:=\set{t\in [-1,1]\,\with\, \varepsilon \mbox{ is not piecewise linear in } B_0+tB_1 }=Y\cap \mathcal N(A)\,
\ee 
where the complexity of $Z$ indicates the complexity of the Seshadri function.
For example, if the Seshadri function is piecewise linear, then $Z$ is empty.
If, on the other hand, the Seshadri function is broken linear, then $Z$ is a perfect set since the Seshadri function consists of infinitely many linear segments, which are never adjacent to each other.

In the first structure we observed, the set $Z$ consists of finitely many points and could only be observed on non-simple abelian surfaces with Picard number two.
In this case, the Seshadri function is piecewise linear on any compact interval that does not contain any elements of $Z$ and, thus, generalizes the case of piecewise linear Seshadri functions. 
In Fig.~\ref{fig:seshadri-funktion-0-4-0} and, respectively, Fig.~\ref{fig:seshadri-funktion-0-8-0} we provide examples with $Z=\emptyset$ and, respectively, $Z=\set{0}$.
We have seen in Example~\ref{ex:non-piecewise-linear} that the Seshadri function given in Fig.~\ref{fig:seshadri-funktion-0-8-0} is not piecewise linear in any neighborhood of $0$ and, thus, the graph consists of infinitely many linear segments accumulating at $0$.
This behavior is illustrated by a \engqq{gap} near $0$ that gets smaller as we add more Seshadri curves to the plot, but the gap will never close completely with finitely many curves. 

\begin{figure}[hbt!]
\centering
\begin{minipage}{0.45\textwidth}
\centering
\begin{tikzpicture}[x=3.3cm,y=2cm]
\draw[color=black] (0.8,1) node[above,draw] {$\matr{0 & 4 \\ 4  & 0}$};

\draw[->,color=black] (-1.05,0.) -- (1.05,0) node[right] {$t$};
\draw[->,color=black] (0,0) -- (0,1.2) node[above] {$\varepsilon(B_0+tB_1)$};
\foreach \x in {-1,-0.5,0,0.5,1}
\draw[shift={(\x,0)},color=black] (0pt,2pt) -- (0pt,-2pt) node[below] {\footnotesize $\x$};
\foreach \y in {0.25,0.75}
\draw[shift={(0,\y)},color=black] (2pt,0pt) -- (-2pt,0pt) node[left] {\footnotesize $\y$};
\foreach \x/\y in {-1/{-1/3},{-1/3}/{1/3},{1/3}/1} 
\draw(\x,{sqrt(1-\x*\x)})--(\y,{sqrt(1-\y*\y)});
\draw[dashed, domain=-1:1, samples=500] plot (\x, {sqrt(1-\x*\x)});
\end{tikzpicture}
\caption{\label{fig:seshadri-funktion-0-4-0}
The Seshadri function on an abelian surface, if it has the intersection matrix $\matr{0 & 4 \\ 4  & 0}$.}
\end{minipage}\hfill
\begin{minipage}{0.45\textwidth}
\centering
\begin{tikzpicture}[x=3.3cm,y=2cm]
\draw[color=black] (0.8,1) node[above,draw] {$\matr{0 & 8 \\ 8  & 0}$};

\draw[->,color=black] (-1.05,0.) -- (1.05,0) node[right] {$t$};
\draw[->,color=black] (0,0) -- (0,1.2) node[above] {$\varepsilon(B_0+tB_1)$};
\foreach \x in {-1,-0.5,0,0.5,1}
\draw[shift={(\x,0)},color=black] (0pt,1pt) -- (0pt,-1pt) node[below] {\footnotesize $\x$};
\foreach \y in {0.25,0.75}
\draw[shift={(0,\y)},color=black] (2pt,0pt) -- (-2pt,0pt) node[left] {\footnotesize $\y$};

\input{pointset_0-8-0}
\end{tikzpicture}
\caption{\label{fig:seshadri-funktion-0-8-0}
The Seshadri function on an abelian surface, if it has the intersection matrix $\matr{0 & 8 \\ 8  & 0}$.}
\end{minipage}
\end{figure}

The second case describes a function where $Z$ is a perfect set that generalizes the case of broken linear functions observed in \cite{Bauer-Schmidt}.
Computational data suggests that all Seshadri functions of simple abelian surfaces with Picard number two are of this type (we refer to \cite[Fig.~1-4]{Bauer-Schmidt} for example plots) and they also occur on non-simple surfaces:
In Fig.~\ref{fig:seshadri-funktion-0-9-0} and Fig.~\ref{fig:seshadri-funktion-4-5-0} we provide examples where computations indicate that the Seshadri functions belong to this type.
With the same argument as in Example~\ref{ex:non-piecewise-linear}, one can show that the Seshadri function in Fig.~\ref{fig:seshadri-funktion-0-9-0} is not piecewise linear at $\pm\tfrac{1}{3}\in Z$ and, thus, we have infinitely many linear segments approaching that point.
However, the underlying data suggests that the local structure near $\pm\tfrac{1}{3}$ seems to be much more intricate compared to Fig.~\ref{fig:seshadri-funktion-0-8-0} and the \engqq{gaps} only close very slowly if we add further Seshadri curves.
The data indicates the following structure:
\begin{itemize}\compact
\item[(i)]
Around every point of $[-1,1]\setminus Z$ there is an open interval, contained in $[-1,1]\setminus Z$, on which $\varepsilon$ is piecewise linear.
(This is clear by the definition of the set $Z$.)
\item[(ii)]
For every point $p\in Z$ there does not exist an interval $(a,p)$ or $(p,b)$, on which $\varepsilon$ is piecewise linear (i.e., on both sides there are infinitely many linear segments accumulating to $p$).
\item[(iii)]
If $I_1$ and $I_2$ are maximal open subintervals of $[-1,1]\setminus Z$, then $I_1$ and $I_2$ are not adjacent to each other (i.e., an endpoint of $I_1$ is never an endpoint of $I_2$).
\end{itemize}
Note that observation (ii) implies that a maximal open subinterval $I=(a,b)$ of $[-1,1]\setminus Z$ always contains infinitely many linear segments that accumulate to $a\neq -1$ and $b\neq 1$.
This property is in stark contrast to the behavior of broken linear functions since it requires the function to be linear on every maximal open subinterval of $[-1,1]\setminus Z$.
Moreover, the observation in (iii) implies that $Z$ is a perfect set.

In Fig.~\ref{fig:seshadri-funktion-4-5-0} we provide a graph for a non-symmetric Seshadri function where the data suggests that $Z$ satisfies (i) -- (iii) and, thus, has a similar structure as in the case of Fig.~\ref{fig:seshadri-funktion-0-9-0}.
Furthermore, we see two overlapping linear segments near $t=0.5$ which shows that there are line bundles with two submaximal curves.

\begin{figure}[hbt!]
\centering
\begin{minipage}{0.45\textwidth}
\centering
\begin{tikzpicture}[x=3.3cm,y=2cm]
\draw[color=black] (0.8,1) node[above,draw] {$\matr{0 & 9 \\ 9  & 0}$};

\draw[->,color=black] (-1.05,0.) -- (1.05,0) node[right] {$t$};
\draw[->,color=black] (0,0) -- (0,1.2) node[above] {$\varepsilon(B_0+tB_1)$};
\foreach \x in {-1,-0.5,0,0.5,1}
\draw[shift={(\x,0)},color=black] (0pt,2pt) -- (0pt,-2pt) node[below] {\footnotesize $\x$};

\foreach \y in {0.25,0.75}
\draw[shift={(0,\y)},color=black] (2pt,0pt) -- (-2pt,0pt) node[left] {\footnotesize $\y$};

\input{pointset_0-9-0}
\end{tikzpicture}
\caption{\label{fig:seshadri-funktion-0-9-0}
The Seshadri function on an abelian surface, if it has the intersection matrix $\matr{0 & 9 \\ 9  & 0}$.}
\end{minipage}\hfill
\begin{minipage}{0.45\textwidth}
\centering
\begin{tikzpicture}[x=3.3cm,y=2cm]
\draw[color=black] (0.8,1) node[above,draw] {$\matr{4 & 5 \\ 5  & 0}$};

\draw[->,color=black] (-1.05,0.) -- (1.05,0) node[right] {$t$};
\draw[->,color=black] (0,0) -- (0,1.2) node[above] {$\varepsilon(B_0+tB_1)$};
\foreach \x in {-1,-0.5,0,0.5,1}
\draw[shift={(\x,0)},color=black] (0pt,2pt) -- (0pt,-2pt) node[below] {\footnotesize $\x$};
\foreach \y in {0.25,0.75}
\draw[shift={(0,\y)},color=black] (2pt,0pt) -- (-2pt,0pt) node[left] {\footnotesize $\y$};

\input{pointset_4-5-0}
\end{tikzpicture}
\caption{\label{fig:seshadri-funktion-4-5-0}
The Seshadri function on an abelian surface, if it has the intersection matrix $\matr{4 & 5 \\ 5  & 0}$.}
\end{minipage}
\end{figure}

In all cases observed so far, computational data suggest that the set $Z$ is either perfect or finite.
It is conceivable that there are cases, where the set $Z$ is neither finite nor perfect, however, no such cases were observed.
Moreover, the underlying data even suggest that there are only finitely many cases where $Z$ is a finite set, which leads to the following generalization of the Conjecture~\ref{conj:finitely-piecewise}:

\begin{conjecture}
Let $A$ be an abelian surface with Picard number two.
\begin{itemize}\compact
\item[(i)]
If $A$ is simple then $Z$ is perfect.
\item[(ii)]
If $A$ is non-simple then $Z$ is either perfect or finite with $|Z|\in \set{0,1,2}$.
Moreover, there are only finitely many intersection matrices such that $Z$ is finite.
\end{itemize}
\end{conjecture}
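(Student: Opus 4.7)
The plan is to combine Prop.~\ref{prop:local-structur-seshadri-function-n-curves}, which confines $Z$ to the closed set where $\varepsilon(L) = \sqrt{L^2}$, with Cor.~\ref{cor:seshadricurves-isometry}, by which the subgroup $G \subset \Aut(\NS(A))$ of nef-cone-preserving isometries permutes submaximal curves and hence preserves $Z$. Closedness of $Z$ is immediate from openness of local piecewise-linearity, so the real task in (i) is perfectness, and in (ii) the sharp structural dichotomy together with the finiteness statement.

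For (i), a simple abelian surface of Picard number two carries no elliptic curves, so every submaximal curve is ample and corresponds via Cor.~\ref{cor:submax-and-pell} to a Pell bound. By the classification cited in \cite{Buchmann-Vollmer:BQF} the group $G$ is infinite, generated (modulo a possible reflection) by a hyperbolic element whose action on the cross-section $[-1,1]$ has fixed points exactly at the two irrational endpoints. Lemma~\ref{lem:local-nowhere-dense} guarantees the existence of some submaximal ample curve; its infinite $G$-orbit produces submaximality cones whose linear segments accumulate at $\pm 1$, forcing $\pm 1 \in Z$. I would then argue that $G$-invariance of $Z$ together with the hyperbolic dynamics, which drives every $G$-orbit towards $\pm 1$, propagates accumulation throughout $Z$ and rules out isolated interior points; the delicate step is to show that any interior $\varepsilon = \sqrt{L^2}$-point lies in the closure of the $G$-orbit of $\set{\pm 1}$, yielding perfectness.

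For (ii), the isometry group $G$ has order at most two, and the two elliptic boundary classes $N_1, N_2$ compute $\varepsilon$ on full neighborhoods of the endpoints by Prop.~\ref{prop:elliptic-seshadri-curve-on-cone}; hence $\pm 1 \notin Z$ and $Z$ lies in the open interior. I would classify candidate interior accumulation points as rays carrying a combination $aN_1 + bN_2$ for which $\varepsilon(L) = \sqrt{L^2}$ is achieved together with accumulation of Pell bounds, as in Example~\ref{ex:non-piecewise-linear}. The bound $|Z| \le 2$ would come from showing that, up to the $\Z_2$-action, there is at most one such combination compatible with condition (iv) of Thm.~\ref{thm:pell-bound-submaximal-curve-1-to-1}. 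Whether $Z$ is finite or perfect would then be governed by the Pell equation $x^2 - L^2 y^2 = 1$ attached to the candidate $L$: if the congruence obstruction observed in Example~\ref{ex:non-piecewise-linear} rules out accumulating Pell bounds, $Z$ is finite; otherwise Prop.~\ref{prop:volume} combined with the density of Pell solutions forces a perfect $Z$.

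The main obstacle is the effective finiteness statement in (ii). Proving that only the intersection matrices of Example~\ref{ex:piecewise-linear} together with finitely many yielding $|Z|\in\set{1,2}$ avoid perfectness requires a sharp uniform Diophantine statement: that for all non-simple Picard-two matrices with sufficiently large off-diagonal entry, the governing Pell equations admit infinitely many coprime solutions passing the required congruences. This combines the volume bound of Prop.~\ref{prop:volume} with explicit growth rates of fundamental Pell solutions of quadratic forms of arbitrary discriminant, and is essentially a number-theoretic problem layered on the geometric framework, which is the principal reason the statement remains conjectural. A secondary obstacle in (i) is making the hyperbolic-dynamics picture rigorous enough to forbid isolated interior limit points of $Z$, which may require an equidistribution input for $G$-orbits of Pell divisors on the limit set.
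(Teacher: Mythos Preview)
The statement is a \emph{conjecture} in the paper; the paper offers no proof, only computational evidence gathered with the algorithm of Thm.~\ref{introthm:seshadri-algorithm}. There is therefore no proof to compare your proposal against, and you yourself concede at the end that the argument is incomplete and the statement ``remains conjectural''. What you have written is a strategy sketch, not a proof, and should be read as such.

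That said, two steps in your outline are genuine gaps rather than merely unfinished details. In (i), $G$-invariance of $Z$ combined with hyperbolic dynamics does \emph{not} propagate accumulation as you claim: a closed $G$-invariant subset of $[-1,1]$ containing the fixed points $\pm 1$ can perfectly well have isolated interior points, since the $G$-orbit of such a point is itself a set of isolated points accumulating only at $\pm 1$. Ruling out interior isolated points requires showing that every ray with $\varepsilon(L)=\sqrt{L^2}$ is a two-sided limit of such rays, and that is a statement about the distribution of Pell divisors not captured by the action of $G$ alone. In (ii), your route to $|Z|\le 2$ via ``at most one combination compatible with condition (iv) of Thm.~\ref{thm:pell-bound-submaximal-curve-1-to-1}'' conflates two different things: condition (iv) detects when a Pell bound comes from a submaximal ample curve, not when a point lies in $Z$. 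Moreover, nothing in your sketch separates the finite branch from the perfect branch before you attempt to bound $|Z|$; the paper's own data (Fig.~\ref{fig:seshadri-funktion-0-9-0}, Fig.~\ref{fig:seshadri-funktion-4-5-0}) already exhibit non-simple cases where $Z$ appears perfect and hence uncountable, so any cardinality bound must be preceded by an argument isolating the finite alternative.
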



\footnotesize

   \bigskip
   Maximilian Schmidt
   Fachbereich Mathematik und Informatik,
   Philipps-Universit\"at Marburg,
   Hans-Meerwein-Stra\ss e,
   D-35032 Marburg, Germany.

   \nopagebreak
   \textit{E-mail address:} \texttt{schmid4d@mathematik.uni-marburg.de}


\end{document}